\documentclass[12pt]{amsart}
\usepackage[left=3cm, right=3cm, top=2.5cm, bottom=2.5cm]{geometry}

\usepackage{amscd}
\usepackage{amsmath, amssymb, comment}
\usepackage{amsfonts}
\usepackage{enumerate}
\usepackage{color}
\usepackage{mathtools}

\usepackage{hyperref}
\usepackage{url}
\newcommand{\de}{\partial}

\newcommand{\dbar}{\overline{\partial}}

\newcommand{\ov}[1]{\overline{#1}}

\newcommand{\ddbar}{\sqrt{-1} \partial \overline{\partial}}

\newcommand{\ti}[1]{\widetilde{#1}}
\newcommand{\vp}{\varphi}

\newcommand{\ve}{\varepsilon}
\newcommand{\e}{\varepsilon}

\renewcommand{\e}{\varepsilon}
\newcommand{\osc}{\mathrm{osc}}

\newcommand{\Vol}{\mathrm{Vol}}

\renewcommand{\leq}{\leqslant}
\renewcommand{\geq}{\geqslant}

\newcommand{\be}{\begin{equation}}
\newcommand{\ee}{\end{equation}}

\renewcommand{\exp}{\mathrm{exp}}

\newcommand{\BK}{\mathrm{BK}}

\begin{document}
\newcounter{theor}
\setcounter{theor}{1}
\newtheorem{claim}{Claim}
\newtheorem{theorem}{Theorem}[section]
\newtheorem{lemma}[theorem]{Lemma}
\newtheorem{corollary}[theorem]{Corollary}
\newtheorem{proposition}[theorem]{Proposition}
\newtheorem{question}{Question}[section]
\newtheorem{goal}{Goal}[section]
\newtheorem{definition}[theorem]{Definition}
\newtheorem{remark}[theorem]{Remark}

\numberwithin{equation}{section}

\title[Alternative proof of the gradient estimate]{Alternative proof of the gradient estimate for complex Hessian equations}

\author[J. Chu]{Jianchun Chu}
\address{School of Mathematical Sciences, Peking University, Yiheyuan Road 5, Beijing 100871, People's Republic of China}
\email{jianchunchu@math.pku.edu.cn}

\author[Y. Liu]{Yaxiong Liu}
\address{Mathematical Science Reseach Center, Chongqing University of Technology, No. 69, Hongguang Avenue, Banan District, Chongqing 400054, China.}
\address{
Department of Mathematics, University of Maryland, 4176 Campus Dr, College Park, MD 20742, USA}
\email{ yxliu269@cqut.edu, yxliu238@umd.edu}

\begin{abstract}
For complex Hessian equations on compact K\"ahler manifolds, we give an alternative proof of the gradient estimate without using blow-up argument, and clarify its dependence on the right-hand side. In addition, using the similar approach, the case of K\"ahler manifolds with boundary is also investigated.
\end{abstract}

\subjclass[2020]{Primary 58J05; Secondary 32Q15, 35J15, 35J60}
\keywords{Complex Hessian equation, Gradient estimate, K\"ahler manifold}

\maketitle

\section{Introduction}

Let $(X,\omega)$ be an $n$-dimensional compact K\"ahler manifold. The complex Hessian equation can be formulated as follows. For $1\leq k\leq n$, denote by $\sigma_{k}$ the $k$-th elementary symmetric polynomial, i.e. for $\lambda=(\lambda_{1},\cdots,\lambda_{n})\in\mathbb{R}^{n}$,
\[
\sigma_{k}(\lambda) := \sum_{1\leq i_{1}<\cdots <i_{k}\leq n}\lambda_{i_{1}}\cdots\lambda_{i_{k}}.
\]
The $k$-th G{\aa}rding cone is defined as
\[
\Gamma_{k} := \left\{ \lambda\in\mathbb{R}^{n}:\sigma_{j}(\lambda)>0 \ \text{for $1\leq j\leq k$} \right\}.
\]
Let $A^{1,1}(X)$ be the space of smooth real $(1,1)$-forms on $X$. For $\alpha\in A^{1,1}(X)$, denote by $\lambda(\alpha)$ the eigenvalue of $\alpha$ with respect to $\omega$. Define
\[
\sigma_{k}(\alpha) := \sigma_{k}(\lambda(\alpha)) = \binom{n}{k}\,\frac{\alpha^{k}\wedge\omega^{n-k}}{\omega^{n}}
\]
and
\[
\Gamma_{k}(X,\omega) := \left\{ \alpha\in A^{1,1}(X):\lambda(\alpha)(x)\in\Gamma_{k} \ \text{for all $x\in X$} \right\}.
\]
For smooth positive function $f$ on $X$, the complex $k$-Hessian equation can be written as
\begin{equation}\label{CHE}
\begin{cases}
\ \sigma_{k}(\omega+\ddbar\vp) = f, \\[1.6mm]
\ \omega+\ddbar \vp \in \Gamma_{k}(X,\omega), \\[1mm]
\ \sup_{X}\vp = 0.
\end{cases}
\end{equation}
When $k=1$, \eqref{CHE} is the Laplace equation. When $k=n$, \eqref{CHE} is the complex Monge-Amp\`ere equation. Yau \cite{Yau78} proved the existence of solutions and then solved the Calabi's conjecture (see \cite{Calabi57}). This result is known as the Calabi--Yau theorem. When $1<k<n$, \eqref{CHE} can be regarded as a natural generalization of the above two important PDEs. Based on Hou--Ma--Wu's estimate \cite{HMW10}, Dinew--Ko\l odziej \cite{DK17} solved \eqref{CHE}.

A usual approach to solve \eqref{CHE} is the continuity method. About establishing a priori estimates, there is a subtlety: the second order estimate seems to depend on the gradient estimate. Such gradient estimate in the Monge--Amp\`ere case was obtained by B\l ocki \cite{Blocki09} and Guan \cite{Guan} independently. However, it seems that their method does not work for the case $1<k<n$. Instead, Hou--Ma--Wu \cite{HMW10} established the following estimate
\begin{equation}\label{HMW estimate introduction}
\sup_{X}|\de\dbar\vp| \leq C\sup_{X}|\de\vp|^{2}+C,
\end{equation}
and pointed out that such estimate is adapted to the blow-up analysis. Later, Dinew--Ko\l odziej \cite{DK17} proved a Liouville type theorem for $k$-subharmonic functions in $\mathbb{C}^{n}$, and then established the gradient estimate by the blow-up argument.

\begin{theorem}[Hou--Ma--Wu \cite{HMW10}, Dinew--Ko\l odziej \cite{DK17}]\label{gradient estimate}
Let $(X,\omega)$ be an $n$-dimensional compact K\"ahler manifold and $\vp$ be a smooth solution of \eqref{CHE}. Then there exists a constant $C$ depending only on $\|f^{1/k}\|_{C^{2}}$, $k$, $n$ and $(X,\omega)$ such that
\[
\sup_{X}|\de\vp|^{2} \leq C.
\]
\end{theorem}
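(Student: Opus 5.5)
We will prove Theorem \ref{gradient estimate} directly by the maximum principle, with no blow-up. The test function is
\[
Q = \log|\de\vp|^{2}_{g} + \psi(\vp), \qquad \psi(t) = -A\,t + \frac{1}{t-B}\ \ \text{or}\ \ \psi(t)=-\frac{A}{2}\log(1-\tfrac{t}{C_0}),
\]
with constants $A,B$ to be chosen. Two inputs are taken as known: the $C^{0}$ bound $\sup_X|\vp|\le C$ (Dinew--Ko\l odziej, via the $L^\infty$ estimate for Hessian equations) and the Hou--Ma--Wu inequality \eqref{HMW estimate introduction}.

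Set $\ti\omega=\omega+\ddbar\vp$ and let $\lambda$ be its eigenvalues at the maximum point $x_0$ of $Q$. Write $F^{i\bar j}=\partial\sigma_k^{1/k}/\partial \ti g_{i\bar j}$, choose coordinates at $x_0$ in which $g$ is the identity and $\ti g$ is diagonal, and put $\mathcal F=\sum_i F^{i\bar i}$.

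\textbf{Step 1: the linearized operator.} Differentiate $\sigma_k^{1/k}(\ti\omega)=f^{1/k}$ once and apply $F^{i\bar i}\de_i\de_{\bar i}$ to $|\de\vp|^2$. The commutation terms contribute $-C|\de\vp|^2\mathcal F$, which is controlled by the curvature of $\omega$. The right-hand side contributes $2\mathrm{Re}\langle \de f^{1/k},\de\vp\rangle$, which needs only $\|f^{1/k}\|_{C^1}$; this is why the final constant depends on $f$ through $\|f^{1/k}\|_{C^2}$. Since $F^{i\bar i}$ is positive definite, we also obtain the good terms
\[
\sum_i F^{i\bar i}\bigl(|\vp_{ii}|^2+|\vp_{i\bar i}|^2\bigr)\big/|\de\vp|^2 .
\]
The first-order condition $\de Q=0$ lets us replace $\de|\de\vp|^2$ by $-\psi'\,|\de\vp|^2\,\de\vp$. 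As in Blocki's argument, the troublesome term $-F^{i\bar i}|\de_i|\de\vp|^2|^2/|\de\vp|^4$ is then cancelled by the good term $\sum F^{i\bar i}|\vp_{ii}|^2$, up to an error of order $\psi'^2F^{i\bar i}|\vp_i|^2$.

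\textbf{Step 2: using the concavity of $\psi$.} We have
\[
F^{i\bar i}\vp_{i\bar i} = F^{i\bar i}(\ti g_{i\bar i}-1) = f^{1/k}-\mathcal F .
\]
Taking $-\psi'\approx A$ large, the term $\psi' F^{i\bar i}\vp_{i\bar i}$ produces $+A\mathcal F - CA$. This absorbs the curvature error $C\mathcal F$, and $\psi''F^{i\bar i}|\vp_i|^2$ absorbs $\psi'^2F^{i\bar i}|\vp_i|^2$ once $\psi''\ge 2\psi'^2$ (more precisely, once $\psi''-\psi'^2\gtrsim \psi'^2$). Altogether we reach
\[
0\ge c\,\psi''\sum_i F^{i\bar i}|\vp_i|^2 + \tfrac{A}{2}\mathcal F - CA .
\]
A lower bound $\mathcal F\ge c(n,k)>0$ on $\Gamma_k$ (Maclaurin) controls the term $-CA$ once $A$ is large. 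It remains to convert $\sum F^{i\bar i}|\vp_i|^2\le C$ into a bound on $|\de\vp|^2$.

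\textbf{Main obstacle.} The difficulty is that $F^{i\bar i}$ may be small in the directions where $|\vp_i|$ is large, which happens when the eigenvalues $\lambda_i$ are large. This is exactly the degeneracy that the blow-up argument sidesteps. We split into two cases. If $\lambda_1\le K|\de\vp|^2$ is bounded in a suitable way, we use the Hou--Ma--Wu estimate \eqref{HMW estimate introduction} together with the standard inequality $F^{i\bar i}\ge c\,\sigma_{k-1}(\lambda|i)/\sigma_k^{1-1/k}$, which gives $F^{i\bar i}\ge c\,|\de\vp|^{-2(k-1)}$ in the worst case. That is not yet sufficient, so we instead exploit the unused good term $\sum F^{i\bar i}|\vp_{i\bar i}|^2/|\de\vp|^2$. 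The first-order condition gives $\vp_{i}\vp_{i\bar i}\approx -\psi'|\de\vp|^2\vp_i$ up to lower-order terms, so $\lambda_i$ is comparable to $A$ in every direction where $\vp_i$ is comparable to $|\de\vp|$. Hence in such a direction $F^{i\bar i}\ge c(A)$ is bounded below, and $\sum F^{i\bar i}|\vp_i|^2\ge c|\de\vp|^2$, which closes the estimate.

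The delicate point is carrying out this directional comparison rigorously. We plan to rotate coordinates so that $\de\vp(x_0)$ lies along $e_1$ and to estimate $F^{1\bar1}$ using the structure of $\sigma_k$ rather than diagonality in that frame. This is the step where we expect to need a careful choice of $\psi$ and possibly an added term $+\epsilon|\de\vp|^2$ in $Q$.
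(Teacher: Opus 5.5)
There is a genuine gap, located exactly at the step you call the main obstacle, and the mechanism you propose for it does not work. In a frame with $g=\mathrm{Id}$ and $\tilde g$ diagonal at $x_0$, the condition $\de Q=0$ reads
\[
\sum_p \vp_{pi}\vp_{\bar p}+\vp_i\vp_{i\bar i}=-\psi'\,|\de\vp|^2\,\vp_i .
\]
The first sum involves the $(2,0)$-Hessian $\vp_{pi}$, on which the equation gives no pointwise control. It is not lower order, so no comparison for $\lambda_i$ can be read off. Even if you dropped it, you would get $\vp_{i\bar i}\approx-\psi'|\de\vp|^2\approx A|\de\vp|^2$. That makes $\lambda_i$ \emph{large}, not comparable to $A$. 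A large eigenvalue gives no lower bound on $F^{i\bar i}$ independent of $|\de\vp|$: for $k\geq2$, $\lambda=(L,\delta,\dots,\delta)$ normalized by $\sigma_k(\lambda)=1$ has $F^{1\bar1}\sim L^{-1}$. So your closing claim that $F^{i\bar i}\geq c(A)$ in directions where $|\vp_i|\sim|\de\vp|$ is unsupported. The planned estimate of $F^{1\bar1}$ in a frame with $\de\vp\parallel e_1$ is precisely the missing argument, not a detail.

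The difficulty is structural, and Steps 1--2 do not hold as written either.
\begin{itemize}
\item You dispose of $-F^{i\bar i}|\de_i(|\de\vp|^2)|^2/|\de\vp|^4=-\psi'^2F^{i\bar i}|\vp_i|^2$ by requiring $\psi''\geq2\psi'^2$. This is incompatible with $-\psi'\approx A$ large: for $h=1/\psi'$ it says $h'\leq-2$, while $|h|\leq1/A$ on an interval of length $\mathrm{osc}\,\vp$. That forces $A\,\mathrm{osc}\,\vp\leq1$.
\item The inequality $\frac A2\mathcal F-CA\geq 0$ cannot be obtained by enlarging $A$, since both terms are linear in $A$.
\item The genuine Blocki cancellation combines the display above with Cauchy--Schwarz, $|\de\vp|^2\sum_p|\vp_{pi}|^2\geq|\vp_i|^2(\psi'|\de\vp|^2+\vp_{i\bar i})^2$. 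This kills $-\psi'^2F^{i\bar i}|\vp_i|^2$ exactly, but leaves the cross term $2\psi'\sum_iF^{i\bar i}\vp_{i\bar i}|\vp_i|^2/|\de\vp|^2$, of size up to $2A\max_iF^{i\bar i}\lambda_i$.
\end{itemize}
For Monge--Amp\`ere, $F^{i\bar i}\lambda_i$ is independent of $i$, which is why Blocki and Guan close. For $1<k<n$ only $\sum_iF^{i\bar i}\lambda_i=f^{1/k}$ is controlled, and the individual terms are unbounded on $\{\sigma_k=f\}$. For example, with $n=3$, $k=2$ and $\lambda=(L,L,-a)$, $L^2-2La=1$, one has $F^{1\bar1}\lambda_1\approx L^2/4$ while $\mathcal F\approx3L/2$. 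Absorbing the cross term by $\psi''F^{i\bar i}|\vp_i|^2$ would need $\psi''\gtrsim|\psi'|\lambda_i/|\de\vp|^2(x_0)$. That amounts to a Hou--Ma--Wu bound in terms of $|\de\vp|^2(x_0)$ rather than $\sup_X|\de\vp|^2$. These two differ by a factor up to $e^{\mathrm{osc}\,\psi}$, and demanding $\psi''\gtrsim K_1e^{\mathrm{osc}\,\psi}|\psi'|$ on the range of $\vp$ is self-defeating.

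This is why the paper never applies the maximum principle to $|\de\vp|^2$. Set $M:=\sup_X|\de\vp|$. The paper combines three ingredients:
\begin{itemize}
\item the Hou--Ma--Wu bound $\sup_X|\de\dbar\vp|\leq4K_1M^2$;
\item the modulus of continuity estimate $|\vp(x)-\vp(y)|\leq CK_0^2(-\log d(x,y))^{-1}$ of Theorem \ref{modulus of continuity estimate}, proved via the logarithmic sup-convolution (almost $k$-subharmonic in the viscosity sense without curvature assumptions) and the Cheng--Xu stability estimate;
\item the interpolation $rM\leq C_*\mathrm{osc}_{B(x_0,r)}\vp+C_*r^2\sup|\Delta\vp|$ at scale $r\sim(K_1M)^{-1}$.
\end{itemize}
The interpolation forces $\mathrm{osc}_{B(x_0,r)}\vp\gtrsim K_1^{-1}$, hence $-\log r\lesssim K_0^2K_1$ and $M\leq\exp(CK_0^2K_1)$.
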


As stated in \cite[Abstract]{DK17}, since Theorem \ref{gradient estimate} is proved by the blow-up argument, then the constant $C$ is not explicit. In this paper, we give an alternative proof of the gradient estimate without using blow-up argument, and clarify its dependence on $f$.

\begin{theorem}\label{main result}
Let $(X,\omega)$ be an $n$-dimensional compact K\"ahler manifold and $\vp$ be a smooth solution of \eqref{CHE}. For $p>n/k$, set $\gamma:=np/(n-kp)$. There exists a constant $C$ depending only on $p$, $k$, $n$ and $(X,\omega)$ such that
\[
\sup_{X}|\de\vp|^{2} \leq \exp\left(e^{C(1+\|f\|_{L^{p}})^{2\gamma}}\right)+\|f\|_{L^{\infty}}+\sup_{X}|\de f^{1/k}|^{2}+\sup_{X}|\de\dbar f^{1/k}|+1.
\]
\end{theorem}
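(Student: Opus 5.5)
We will prove Theorem \ref{main result} by a maximum principle argument applied to a carefully chosen test function, with the blow-up step replaced by a quantitative use of the $L^\infty$ estimate and the Hou--Ma--Wu type second order estimate. First, by the known $L^{\infty}$ estimate for complex Hessian equations with $L^{p}$ right-hand side ($p>n/k$, via pluripotential methods of Dinew--Ko\l odziej, or the PDE approach of Guo--Phong--Tong), we have $\sup_X|\vp|\le C_0(1+\|f\|_{L^p})^{\gamma'}$ for a suitable exponent; this is the source of the factor $(1+\|f\|_{L^p})^{2\gamma}$ in the statement. We will also use the estimate \eqref{HMW estimate introduction} in a sharpened form in which the constant is controlled by $\|f\|_{L^\infty}$, $\sup|\de f^{1/k}|^2$ and $\sup|\de\dbar f^{1/k}|$. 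These are exactly the terms added on the right-hand side, so we may assume throughout that $K:=\sup_X|\de\vp|^2$ dominates each of them.

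Next, we consider a test function of the form
\[
Q=\log|\de\vp|^{2}+h(\vp),
\]
with $h$ chosen so that $-h''$ is large relative to $h'^2$ (for instance $h(s)=-\log(1+A-s)$ or a double-exponential type function $h(s)=e^{-B(s-\inf\vp)}$ adapted to the oscillation bound $M=\osc\vp$). At a maximum point $x_0$, we work in coordinates where $\omega=\mathrm{Id}$ and $\omega_\vp$ is diagonal, and apply the linearized operator $L=\sum_i F^{i\bar i}\de_i\de_{\bar i}$, where $F^{i\bar i}=\de\sigma_k^{1/k}/\de\lambda_i$. Differentiating the equation $\sigma_k^{1/k}(\omega_\vp)=f^{1/k}$ once controls the third-order terms through $\de f^{1/k}$. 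The curvature terms give $-C\sum F^{i\bar i}$. The good terms are $\sum F^{i\bar i}(|\vp_{ki}|^2+|\vp_{k\bar i}|^2)/|\de\vp|^2$, together with $-h''\sum F^{i\bar i}|\vp_i|^2$ and $h'(\sum F^{i\bar i}-f^{1/k})$ coming from $L\vp$.

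The main obstacle is the bad term $|\sum F^{i\bar i}\de_i|\de\vp|^2|^2/|\de\vp|^4$, which arises from $\log$. We will absorb it by splitting the indices into those with $\lambda_i$ large and those with $\lambda_i$ small. This uses the concavity and structure of $\sigma_k$, in particular $F^{i\bar i}\ge c\sum_j F^{j\bar j}$ for the index with the smallest eigenvalue, and $\sum F^{i\bar i}\ge c>0$. At $x_0$ the first-order condition $\de Q=0$ converts $\de|\de\vp|^2$ into $-h'\de\vp\,|\de\vp|^2$, so the dangerous term becomes $h'^2\sum F^{i\bar i}|\vp_i|^2$. This is dominated by $-h''\sum F^{i\bar i}|\vp_i|^2$ provided $-h''\ge 2h'^2$. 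The remaining difficulty is the sign of $h'$ needed to extract $\sum F^{i\bar i}$ to beat curvature, which forces $h'$ to be of size $e^{CM}$.

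To close the argument, we will feed \eqref{HMW estimate introduction} into the case analysis: if some eigenvalue is negative and large, it is at most $CK$. Balancing this against the good terms yields $|\de\vp|^2(x_0)\le e^{e^{CM}}$. Exponentiating $Q$ then gives the final bound $\exp(e^{C(1+\|f\|_{L^p})^{2\gamma}})$, with the exponent $2\gamma$ coming from squaring in the choice of $h$.
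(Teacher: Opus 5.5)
Your proposal has a genuine gap. It is a direct B\l ocki--Guan maximum principle argument for $\log|\de\vp|^{2}+h(\vp)$, which is exactly the method the paper says does not seem to extend to $1<k<n$, and the step where it would have to succeed is only asserted.

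Since $L\vp=f^{1/k}-\mathcal{F}$ with $\mathcal{F}=\sum_{i}F^{i\ov{i}}$, two conditions on $h$ are needed:
\begin{itemize}
\item beating the curvature term $-C\mathcal{F}$ needs $h'\leq-C$;
\item absorbing $-h'^{2}\sum_{i}F^{i\ov{i}}|\vp_{i}|^{2}$ needs $h''\geq2h'^{2}$. The good term is $+h''\sum_{i}F^{i\ov{i}}|\vp_{i}|^{2}$, so your sign is reversed.
\end{itemize}
With $g=-h'$, the second condition says $(1/g)'\geq2$, so $g\leq1/(2M)$ at the top of the range of $\vp$, where $M=\osc_{X}\vp$. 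The two requirements are therefore incompatible once $M$ is large, and both of your sample choices of $h$ violate at least one of them.

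B\l ocki gets around this by cancelling the bad term against $\sum_{p}F^{p\ov{p}}|\sum_{j}\vp_{jp}\vp_{\ov{j}}|^{2}/|\de\vp|^{4}$, using the first-order condition. This leaves the cross term
\[
2h'\sum_{p}F^{p\ov{p}}\vp_{p\ov{p}}\,\frac{|\vp_{p}|^{2}}{|\de\vp|^{2}}.
\]
For $k=n$ one has $F^{p\ov{p}}\lambda_{p}=f^{1/n}/n$ for every $p$, so this term is harmless. For $1<k<n$, however, $kF^{p\ov{p}}\lambda_{p}=\sigma_{k}^{1/k-1}\big(\sigma_{k}(\lambda)-\sigma_{k}(\lambda|p)\big)$, and $\sigma_{k}(\lambda|p)$ can be very negative. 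So $F^{1\ov{1}}\lambda_{1}$ is controlled neither by $f^{1/k}$ nor by $\mathcal{F}$. For example, with $k=2$, $n=3$ and $\lambda=(M+2f/M,\,M,\,-M/2)$, one has $\sigma_{2}(\lambda)=f$ but $F^{1\ov{1}}\lambda_{1}\approx M\mathcal{F}/6$. Your remark about splitting indices does not address this.

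Feeding in \eqref{HMW estimate introduction} does not close the argument either. The bound $|\lambda_{i}|\leq CK$ is quadratic in the gradient, hence invariant under the blow-up rescaling. Combined with $\sup_{X}|\vp|\leq M$, it only produces, in the limit, a bounded $C^{1,1}$ maximal $k$-subharmonic function on $\mathbb{C}^{n}$ with nonvanishing gradient at the origin. Excluding that is precisely the Dinew--Ko\l odziej Liouville theorem. So ``balancing against the good terms yields $|\de\vp|^{2}(x_{0})\leq e^{e^{CM}}$'' has no mechanism behind it, and attributing $2\gamma$ to squaring in $h$ is reverse-engineered.

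What is missing is a quantitative, non-scale-invariant replacement for the Liouville theorem. The paper supplies it as the modulus of continuity $|\vp(x)-\vp(y)|\leq CK_{0}^{2}(-\log d(x,y))^{-1}$. This is proved via the logarithmic sup-convolution $\vp_{\ve}$, which is viscosity $(1+K_{0}\theta_{\ve})\omega$-$k$-subharmonic without curvature assumptions and $L^{1}$-close to $\vp$, together with the Cheng--Xu stability estimate. At a maximum point $x_{0}$ of $Q=|\de\vp|$, the paper then combines
\begin{itemize}
\item the interpolation inequality $rQ\leq C_{*}\osc_{B(x_{0},r)}\vp+C_{*}r^{2}\|\Delta\vp\|_{L^{\infty}}$,
\item the Hou--Ma--Wu bound $\|\Delta\vp\|_{L^{\infty}}\leq4nK_{1}Q^{2}$,
\item the choice $r=(8nC_{*}K_{1}Q)^{-1}$.
\end{itemize}
These force $\osc_{B(x_{0},r)}\vp\geq(16nC_{*}^{2}K_{1})^{-1}$, and the modulus of continuity then gives $Q\leq\exp(CK_{0}^{2}K_{1})$. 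The exponent $2\gamma$ comes from $K_{0}=C_{0}(1+\|f\|_{L^{p}})^{2\gamma}$. The inner exponential comes from the Hou--Ma--Wu constant $K_{1}=\exp(C_{1}K_{0}+C_{1})$, and the outer one from inverting the logarithmic modulus of continuity.
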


Let us discuss the proof of Theorem \ref{main result}. In \eqref{HMW estimate introduction}, the dependence of $C$ on $f$ is explicit (Theorem \ref{HMW estimate}). Combining this with the Sobelov embedding and $W^{2,p}$-estimate, the gradient estimate can be reduced to the modulus of continuity estimate (Section \ref{sec:proof of main result}).

The argument of the modulus of continuity estimate (Theorem \ref{modulus of continuity estimate}) is inspired by Cheng-Xu \cite{CX24}. They considered \eqref{CHE} on compact K\"ahler manifolds with non-negative holomorphic bisectional curvature, and established the H\"older estimate by using the sup-convolution approximation (\cite[(1.2)]{CX24}). Here the curvature assumption is to guarantee that the sup-convolution approximation is almost $k$-subharmonic in the viscosity sense. In our setting, the logarithmic sup-convolution approximation \eqref{def of vp ve} is considered instead. The advantage is that such approximation is automatically almost $k$-subharmonic in the viscosity sense without any curvature assumption, while the disadvantage is that only the modulus of continuity estimate can be obtained. However, it is enough for our use.

Recently, Xu \cite{Xu26} established the H\"older estimate (\cite[Corollary 1.1]{Xu26}) for the solution of \eqref{CHE} on compact Hermitian manifolds. Let us compare \cite[Corollary 1.1]{Xu26} with Theorem \ref{modulus of continuity estimate}. Indeed, since the H\"older estimate implies modulus of continuity estimate, then \cite[Corollary 1.1]{Xu26} is stronger than Theorem \ref{modulus of continuity estimate}. However, \cite[Corollary 1.1]{Xu26}, the dependence of constant on the right-hand side seems to be non-explicit. The reason is that \cite[Proposition 5.3]{Xu26} was proved by blow-up argument and so the dependence of constant $C_{B}$ on $B\geq\mathrm{osc}_{M}h$ is non-explicit. In the proof of \cite[Theorem 1.4]{Xu26}, the constant $B$ is chosen as the upper bound of $(\mathrm{osc}_{M}u+Ct)$. Since the estimate of $\mathrm{osc}_{M}u$ depends on the right-hand side, then the dependence of constant in \cite[Theorem 1.4]{Xu26} is non-explicit. This leads to the dependence of constant in \cite[Corollary 1.1]{Xu26} is also non-explicit.

The aim of this paper is to give a gradient estimate with explicit dependence of $f$. We may not apply \cite[Corollary 1.1]{Xu26} directly and try to establish the modulus of continuity estimate (Theorem \ref{modulus of continuity estimate}) instead.

Finally, we also consider the Dirichlet problem for the complex Hessian equation. By generalizing the argument of Theorem \ref{main result}, we provide an alternative proof of the gradient estimate (Theorem \ref{gradient estimate Dirichlet}).

\bigskip

The rest of the paper is organized as follows. In Section \ref{sec:preliminaries}, we collect some a priori estimates which will be used later. In Section \ref{sec:modulus of continuity estimate}, we establish the modulus of continuity estimate. The proof of Theorem \ref{main result} will be given in Section \ref{sec:proof of main result}. In Section \ref{sec:boundary case}, the boundary case is investigated. In Appendix \ref{App:proof of Hou-Ma-Wu's estimate}, we give the proof of Hou--Ma--Wu's estimate for the reader's convenience.

\bigskip

{\bf Acknowledgments:} The first-named author was partially supported by National Key R\&D Program of China 2024YFA1014800 and 2023YFA1009900, and NSFC grants 12471052 and 12271008. Part of this work was carried out while the second-named author was visiting the Institute for Theoretical Sciences at Westlake University, which he would like to thank for the hospitality and support.

\bigskip

{\bf Declaration on the use of AI: }
The present paper is motivated by the authors’ previous joint work \cite{CLM26,CLMZ26}. More precisely, we discussed the explicit gradient estimate in \cite[\S 1.2]{CLM26}, and one of the main obstacles to adapting the approach of \cite{CLMZ26} to the complex Hessian setting was the lack of a refined quantitative gradient estimate. The authors have been interested in this problem since their earlier work, but were unable at that time to overcome the difficulty caused by the curvature assumption in the method of Cheng--Xu \cite{CX24}.

During discussions between the authors and GPT-5.6 Sol concerning this difficulty, GPT-5.6 Sol suggested the logarithmic sup-convolution approximation \eqref{def of vp ve}. The proof of Theorem \ref{stability estimate boundary} was provided by GPT-6 Astra, based in part on references supplied by the authors, including works by Guo--Phong \cite{GP24}, Cheng--Xu \cite{CX24} and Xu \cite{Xu26}.

All mathematical arguments were written by the authors, who take full responsibility for the content of this paper.

\section{Preliminaries}\label{sec:preliminaries}

\subsection{Zeroth order estimate}
In \cite{Hou09}, Hou established the zeroth order estimate depending on $\|f\|_{L^{\infty}}$ by the iteration method (see also \cite{HMW10}). In this paper, we will apply the zeroth order estimate depending on $\|f\|_{L^{p}}$. Such estimate was proved by Dinew-Ko\l odziej \cite{DK14}, and later Guo-Phong \cite{GP24} gave a different PDE approach.

\begin{theorem}[Dinew-Ko\l odziej \cite{DK14}, Guo-Phong \cite{GP24}]\label{zeroth order estimate}
Let $(X,\omega)$ be an $n$-dimensional compact K\"ahler manifold and $\vp$ be a smooth solution of \eqref{CHE}. For $p>n/k$, there exists a constant $C$ depending only on $\|f\|_{L^{p}}$, $p$, $k$, $n$ and $(X,\omega)$ such that
\[
\sup_{X}|\vp| \leq C.
\]
\end{theorem}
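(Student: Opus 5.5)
We follow the PDE approach of Guo--Phong, comparing $\vp$ with the solution of an auxiliary complex Monge--Amp\`ere equation. Since $\sup_X\vp=0$, it suffices to bound $-\inf_X\vp$. Write $\omega_\vp:=\omega+\ddbar\vp$. Normalize $\int_X\omega^n=1$ and, for $s>0$, set
\[
\Omega_s:=\{\vp<-s\},\qquad A_s:=\int_{\Omega_s}(-\vp-s)\,f^{n/k}\omega^n .
\]
By Yau's theorem we can solve
\[
(\omega+\ddbar\psi_s)^n=\frac{\tau_k(-\vp-s)\,f^{n/k}}{A_s}\,\omega^n,\qquad \sup_X\psi_s=0,
\]
where $\tau_k$ is a smooth positive approximation of $x\mapsto x_+$.

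The first key step is a comparison: for suitable constants $\e,\beta$ depending only on $n,k$ (for instance $\beta=(n+1)/n$),
\[
-\vp-s\leq \e A_s^{1/(n+1)}(-\psi_s+\Lambda)^{n/(n+1)}\quad\text{on }\Omega_s .
\]
To prove it, consider $\Phi=-\e(-\psi_s+\Lambda)^{n/(n+1)}-\vp-s$ at an interior maximum in $\Omega_s$ and apply the maximum principle with the linearized operator $L=\sigma_k^{i\bar j}(\omega_\vp)\partial_i\partial_{\bar j}$. By the Maclaurin/G\aa rding inequality,
\[
\frac1n\,\mathrm{tr}_{\omega_\vp}(\omega_{\psi_s})\geq c_{n,k}\,\sigma_k(\omega_\vp)^{(k-1)/k}\Bigl(\frac{\omega_{\psi_s}^n}{\omega^n}\Bigr)^{1/n}.
\]
Combined with the concavity of $t\mapsto t^{n/(n+1)}$, this gives $L\Phi>0$ wherever $\Phi>0$, a contradiction. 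This is the main obstacle: the constants must be arranged so that the $\sigma_k$ structure, via $\sigma_k^{1/k}=f^{1/k}$, matches the density $f^{n/k}$ of the Monge--Amp\`ere equation.

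Next we use Tian's $\alpha$-invariant: $\int_X e^{-\alpha\psi_s}\omega^n\leq C$ uniformly. The comparison then yields
\[
\int_{\Omega_s}\exp\Bigl(c\,\frac{(-\vp-s)^{(n+1)/n}}{A_s^{1/n}}\Bigr)\omega^n\leq C .
\]
Applying H\"older's inequality with $f\in L^p$, and $p>n/k$ so that $f^{n/k}\in L^{pk/n}$ with $pk/n>1$, we get
\[
A_s\leq \|f\|_{L^p}^{n/k}\,\|(-\vp-s)_+\|_{L^{q}}\,|\Omega_s|^{\delta}
\]
for some $\delta>0$. The exponential integrability controls $\|(-\vp-s)_+\|_{L^q}$ by $C A_s^{1/(n+1)}|\Omega_s|^{1/q'}$. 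Hence $A_s\leq C|\Omega_s|^{1+\delta_0}$, and with $\phi(s):=|\Omega_s|$,
\[
r\,\phi(s+r)\leq A_s\leq C\phi(s)^{1+\delta_0}.
\]

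Finally, a De Giorgi-type iteration lemma gives $\phi(s)=0$ for all $s\geq s_0+C\phi(s_0)^{\delta_0}$. Choosing $s_0$ so that $\phi(s_0)\leq 1$ is automatic, since $\int\omega^n=1$; alternatively one can take $s_0$ controlled by $\int_X(-\vp)\omega^n$. The latter is bounded uniformly because $\vp$ is $\omega$-subharmonic, as $k\geq1$ gives $\mathrm{tr}_\omega\omega_\vp>0$, and $\sup_X\vp=0$, so the standard Green's function argument applies. This bounds $\sup_X|\vp|$ in terms of $\|f\|_{L^p}$, $p$, $k$, $n$ and $(X,\omega)$.
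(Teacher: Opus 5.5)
Your comparison step is sound in substance, provided $\mathrm{tr}_{\omega_\vp}$ is read as the trace against $\sigma_k^{i\bar j}(\omega_\vp)$, since $\omega_\vp$ need not be positive for $k<n$. The iteration, however, does not close as written, for two concrete reasons.

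\textbf{The measure in the iteration.} You take $\phi(s)=|\Omega_s|$, the $\omega^n$-volume, and then claim $r\phi(s+r)\le A_s$. This is unjustified. $A_s=\int_{\Omega_s}(-\vp-s)f^{n/k}\omega^n$ only gives $A_s\ge r\int_{\Omega_{s+r}}f^{n/k}\omega^n$, and $f$ has no lower bound in terms of $\|f\|_{L^p}$.

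The other inequality also fails with this measure. The exponential bound cannot give better than $\|(-\vp-s)_+\|_{L^q}\le CA_s^{1/(n+1)}|\Omega_s|^{1/q-1/m}$. So your two H\"older steps combine at best to $A_s^{n/(n+1)}\le C|\Omega_s|^{1-\frac{n}{kp}-\frac1m}$. The resulting exponent $\frac{n+1}{n}(1-\frac{n}{kp}-\frac1m)$ exceeds $1$ only when $kp>n(n+1)$.

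The remedy, as in Guo--Phong--Tong, is to iterate with respect to $\mu:=f^{n/k}\omega^n$ and set $\phi(s):=\mu(\Omega_s)$. Then $r\phi(s+r)\le A_s$ is immediate, and H\"older in $\mu$ gives
\[
A_s\le\Big(\int_{\Omega_s}(-\vp-s)^m\,d\mu\Big)^{1/m}\phi(s)^{1-\frac1m},\qquad \int_{\Omega_s}(-\vp-s)^m\,d\mu\le\|f\|_{L^p}^{n/k}\,\|(-\vp-s)_+\|^m_{L^{mkp/(kp-n)}}\le CA_s^{\frac{m}{n+1}} .
\]
Hence $A_s\le C\phi(s)^{\frac{n+1}{n}(1-\frac1m)}$, with exponent $>1$ once $m>n+1$. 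At the end, $\mu(\Omega_s)=0$ forces $\Omega_s=\emptyset$ because $f>0$.

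\textbf{The constant in the exponential bound is not uniform.} Set $c_1:=\frac{n}{n+1}\e A_s^{1/(n+1)}(-\psi_s+\Lambda)^{-1/(n+1)}$. The maximum principle computation actually gives
\[
L\Phi\ \ge\ c_1\,\sigma_k^{i\bar j}(\omega_\vp)(\omega_{\psi_s})_{i\bar j}-kf+(1-c_1)(n-k+1)\,\sigma_{k-1}(\omega_\vp).
\]
The last term is not controlled by $f$, so it can be dropped only if $c_1\le 1$, which forces $\Lambda\ge C_{n,k}A_s$. Consequently Tian's $\alpha$-invariant yields
\[
\int_{\Omega_s}\exp\big(c(-\vp-s)^{(n+1)/n}A_s^{-1/n}\big)\,\omega^n\le Ce^{CA_s},
\]
not a uniform bound. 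You therefore need an a priori bound on $A_{s_0}$ at the starting level; after that, $A_s\le A_{s_0}$ for $s\ge s_0$. This bound does not follow from H\"older, because a priori $\vp$ is only in $L^q$ for small $q$.

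This is exactly where the Green's function bound must enter, and not merely to ensure $\phi(s_0)\le 1$. Since $\Lambda\simeq A_s$, the comparison gives, uniformly, $\|(-\vp-s)_+\|_{L^q}\le C_q(A_s+A_s^{1/(n+1)})$. Hence
\[
A_s\le C\|f\|_{L^p}^{n/k}|\Omega_s|^{\delta}\big(A_s+A_s^{1/(n+1)}\big)
\]
for some $\delta>0$. Since $|\Omega_s|\le s^{-1}\int_X(-\vp)\omega^n\le C/s$, choosing $s_0$ large depending on $\|f\|_{L^p}$ makes the prefactor $\le\frac12$, which yields $A_{s_0}\le 1$.

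With these two repairs your argument becomes a valid proof. For comparison, the paper does not reprove this theorem; it cites Dinew--Ko\l odziej and Guo--Phong. It only extracts the explicit dependence, by applying the Cheng--Xu stability estimate with $v=0$ and $\delta=1$, together with the same Green's function bound $\|\vp\|_{L^1}\le C_G$.
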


\subsection{Second order estimate}

\begin{theorem}[Hou-Ma-Wu \cite{HMW10}]\label{HMW estimate}
Let $(X,\omega)$ be an $n$-dimensional compact K\"ahler manifold and $\vp$ be a smooth solution of \eqref{CHE}. Let $C_{\BK}\geq0$ be a constant such that $R(u,\ov{u},v,\ov{v})\geq-C_{\BK}$ for any unit vector $u,v\in T^{1,0}X$. Then there exists a dimensional constant $A_{n}$ (depending only on $n$) such that
\[
\sup_{X}|\de\dbar \vp| \leq e^{A_{n}(C_{\BK}+1)(\sup_{X}|\vp|+1)}\left(\sup_{X}|\de\vp|^{2}+\|f\|_{L^{\infty}}+\sup_{X}|\de f^{1/k}|^{2}+\sup_{X}|\de\dbar f^{1/k}|+1\right).
\]
\end{theorem}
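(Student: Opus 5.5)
We follow the maximum principle argument of Hou--Ma--Wu, tracking the constants explicitly. Write $\omega_\vp:=\omega+\ddbar\vp$ and let $\lambda_1\geq\cdots\geq\lambda_n$ be its eigenvalues with respect to $\omega$. Since $\lambda(\omega_\vp)\in\Gamma_k\subset\Gamma_1$, we have $\sum_i\lambda_i>0$, so $|\lambda_i|\leq (n-1)\lambda_1$. Hence it suffices to bound $\lambda_1$ from above, and then $|\de\dbar\vp|\leq C_n(\lambda_1+1)$.

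\textbf{Test function.} We rewrite the equation as $F(\omega_\vp):=\sigma_k^{1/k}(\omega_\vp)=f^{1/k}=:\tilde f$ and use the concave operator $F$ with $F^{i\bar j}=\partial F/\partial a_{i\bar j}$. Consider
\[
Q=\log\lambda_1+\psi(|\de\vp|^2)+\phi(\vp),
\]
where:
\begin{itemize}
\item $\psi(t)=-\tfrac12\log\bigl(1-\tfrac{t}{2K}\bigr)$ with $K=\sup_X|\de\vp|^2+\|f\|_{L^\infty}+\sup|\de\tilde f|^2+1$;
\item $\phi(t)=e^{-A(t-\inf\vp)}$ or a similar exponential, with $A=A_n(C_{\BK}+1)$.
\end{itemize}
If the largest eigenvalue is not simple, we use a perturbation (or the viscosity interpretation) so that $\lambda_1$ is smooth near the maximum point $x_0$.

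\textbf{Computation at $x_0$.} In coordinates where $\omega_\vp$ is diagonal at $x_0$, we differentiate the equation twice. Concavity of $F$ together with the standard second-derivative formula for $\lambda_1$ gives
\[
F^{i\bar i}\nabla_i\nabla_{\bar i}\lambda_1\geq \nabla_1\nabla_{\bar 1}\tilde f+\sum_{p>1}\frac{F^{i\bar i}|\nabla_i a_{p\bar1}|^2}{\lambda_1-\lambda_p}-\text{(concavity terms)}-C_{\BK}\lambda_1\sum F^{i\bar i}.
\]
The term $\nabla_1\nabla_{\bar1}\tilde f$ is controlled by $\sup|\de\dbar\tilde f|$. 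The term $\psi'F^{i\bar i}\nabla_i\nabla_{\bar i}|\de\vp|^2$ produces the good quantity $\psi'\sum F^{i\bar i}(|\nabla_i\nabla\vp|^2+\lambda_i^2)$, up to errors of size $\psi'\bigl(|\de\vp||\de\tilde f|+C_{\BK}K\sum F^{i\bar i}\bigr)$. The term from $\phi$ gives
\[
\phi'F^{i\bar i}(\lambda_i-1)=\phi'\Bigl(k^{-1}\tilde f\,\tfrac{\,\cdot\,}{\,\cdot\,}-\sum F^{i\bar i}\Bigr)\cdot(\text{sign}),
\]
and with the right sign this yields $+A\phi\sum F^{i\bar i}$, which dominates $C_{\BK}\sum F^{i\bar i}$.

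\textbf{Case split.} We follow HMW.
\begin{itemize}
\item \emph{Case 1:} $\lambda_n<-\delta\lambda_1$. Then $F^{n\bar n}\geq c_{n,k}\sum F^{i\bar i}$, and the term $\psi' F^{n\bar n}\lambda_n^2$ gives $\lambda_1^2\sum F^{i\bar i}\lesssim K\cdot(\ldots)$, hence $\lambda_1\leq CK$.
\item \emph{Case 2:} otherwise. The third-order terms $F^{i\bar i}|\nabla_i\lambda_1|^2/\lambda_1^2$ are handled by splitting indices into $\{i: F^{i\bar i}\leq \delta^{-1}F^{1\bar1}\}$ and its complement. We use the critical-point relation $\nabla\lambda_1/\lambda_1=-\psi'\nabla|\de\vp|^2-\phi'\nabla\vp$, the good term $\sum_{p>1}$, and the concavity inequality of $\sigma_k^{1/k}$. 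Choosing $\psi$ with $\psi''\geq 2\psi'^2$ absorbs $\psi'^2|\nabla|\de\vp|^2|^2$. What remains is
\[
\psi'F^{1\bar1}\lambda_1^2\leq C\phi'^2F^{1\bar1}K,
\]
hence $\lambda_1\leq C e^{CA\sup|\vp|}K$.
\end{itemize}

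\textbf{Constants and main obstacle.} All constants depend only on $n$, $C_{\BK}$ and $\osc\vp\leq 2\sup|\vp|$. The factors $e^{A\sup|\vp|}$ arise from $\phi'/\phi$ and from the ratio $\sup\phi/\inf\phi$. Since $Q(x)\leq Q(x_0)$ and $\psi$ is bounded between $0$ and $\tfrac12\log 2$, we get
\[
\sup\lambda_1\leq e^{A_n(C_{\BK}+1)(\sup|\vp|+1)}K.
\]
The main obstacle is Case 2: the third-order terms must be absorbed with only dimensional constants, keeping $\tilde f$-dependence through $\sup|\de\tilde f|^2$ and $\sup|\de\dbar\tilde f|$. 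This requires the precise HMW lemma that $\sum_{i}F^{i\bar i}\geq c_{n,k}$ and $F^{i\bar i}\geq c\sum F$ for negative $\lambda_i$ in $\Gamma_k$. The extra $\|f\|_{L^\infty}$ term enters through $\tilde f\leq \sum F^{i\bar i}\lambda_i$ bounds.
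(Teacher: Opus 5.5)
Your overall scheme is the paper's: maximize $\log\lambda_1+\rho_1(|\de\vp|^2)+\rho_0(\vp)$, where your $\psi$ is exactly the paper's $\rho_1$. You split according to whether $\lambda_n<-\delta\lambda_1$, and in Case 2 you split the third-order terms along $I=\{i:F^{i\ov{i}}>\delta^{-1}F^{1\ov{1}}\}$. But the $\vp$-dependent term is mis-specified, and this is precisely the term that produces the factor $e^{A_n(C_{\BK}+1)(\sup_X|\vp|+1)}$. The argument breaks there.

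With $\phi(t)=e^{-A(t-\inf\vp)}$ you have $-\phi'=A\phi\in[Ae^{-A\osc\vp},A]$. So at the maximum point you are only guaranteed $-\phi'\mathcal{F}\geq Ae^{-A\osc\vp}\mathcal{F}$, where $\mathcal{F}=\sum_iF^{i\ov{i}}$. For $A=A_n(C_{\BK}+1)$ this does not dominate the two curvature errors of size $C_{\BK}\mathcal{F}$: one comes from commuting derivatives in $F^{i\ov{i}}\vp_{1\ov{1}i\ov{i}}/\lambda_1$, the other from $\psi'F^{i\ov{i}}(|\de\vp|^2)_{i\ov{i}}$. You cannot dispense with this good term in Case 2, because the only other positive term there is $\frac{1}{4K}F^{1\ov{1}}\lambda_1^2$ and $\mathcal{F}/F^{1\ov{1}}$ is unbounded.

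If you renormalize so that $-\phi'\geq A$, e.g.\ $\phi=e^{A(\sup\vp-t)}$, then $\osc\phi\approx e^{A\osc\vp}$. Since $\phi$ enters $Q$ additively, $Q(x)\leq Q(x_0)$ then only gives $\sup_X\lambda_1\leq\exp(Ce^{A\osc\vp})\lambda_1(x_0)$, a double exponential. Your accounting of the exponential factor via $\phi'/\phi$ and $\sup\phi/\inf\phi$ does not fit an additive test function: $\phi'/\phi=-A$ is constant, and only $\sup\phi-\inf\phi$ enters.

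What the argument actually needs is a function $\rho_0$ with three properties:
$-\rho_0'\geq cC_{\BK}$ uniformly, to beat the curvature terms;
$\rho_0''\geq 4\delta(\rho_0')^2$, to absorb the $2\delta$-portion of $\sum_{i\in I}F^{i\ov{i}}|\vp_{1\ov{1}i}|^2/\lambda_1^2$, since the concavity term only controls that sum with the factor $\frac{1-\delta}{1+\delta}$;
and $|\rho_0'|\lesssim C_{\BK}$, so that Case 1 and the $i\notin I$ error $2(\rho_0')^2\delta^{-1}KF^{1\ov{1}}$ stay polynomial.
Because $(1/|\rho_0'|)'\geq 4\delta$ over an interval of length $\osc\vp$, these force $\delta\lesssim(C_{\BK}\osc\vp)^{-1}$ and $\osc\rho_0\gtrsim C_{\BK}\osc\vp$. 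The latter is the true source of the single exponential.

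The paper takes $\rho_0(t)=-\frac{1}{4\delta}\log\bigl(1+\frac{t}{2M_0}\bigr)$ with $M_0=\sup_X|\vp|+1$ and $\delta=(96C_{\BK}M_0)^{-1}$, and uses this same $\delta$ in the case split and in $I$. Both cases then bound $\lambda_1(x_0)$ by a polynomial in $C_{\BK}M_0$ times $K$, and $e^{A_nC_{\BK}M_0}$ enters only through $\osc\rho_0\leq\frac{\log(3/2)}{4\delta}$. An exponential can also work, but only with rate of order $1/M_0$ and amplitude of order $C_{\BK}M_0$, e.g.\ $8C_{\BK}M_0e^{-t/M_0}$, not with rate $A_n(C_{\BK}+1)$.

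Two smaller gaps in your Case 2. First, your final inequality omits the factor $\delta^{-1}$ coming from the $i\notin I$ error above. Second, the leftover $-|\rho_0'|f^{1/k}$ from $\rho_0'F^{i\ov{i}}\vp_{i\ov{i}}=\rho_0'(f^{1/k}-\mathcal{F})$ must be absorbed using the $\Gamma_k$ structure, e.g.\ $\lambda_1F^{1\ov{1}}\geq\frac{1}{n}f^{1/k}$. The paper handles this step by appeal to \cite[(2.32)]{HMW10}.
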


\begin{proof}
This is \cite[Theorem 1.1]{HMW10} with the dependence of the constant on $f$ made explicit. Since the argument needs to be modified slightly (using the test function of Xu \cite[Proposition 5.2]{Xu26}), we give a sketch of proof in Appendix \ref{App:proof of Hou-Ma-Wu's estimate}.
\end{proof}

\subsection{Stability estimate}
\begin{definition}
For $v\in C(X)$, we say that $\omega+\ddbar v\in\Gamma_{k}(X,\omega)$ in the viscosity sense if for any $x\in X$ and for any upper test function $P$ of $v$ at $x$ (i.e. $P(x)=v(x)$ and $P\geq v$ near $x$), one has
\[
\lambda(\omega+\ddbar P)(x) \in \Gamma_{k}.
\]
\end{definition}

\begin{theorem}[Cheng-Xu \cite{CX24}]\label{stability estimate}
Let $(X,\omega)$ be an $n$-dimensional compact K\"ahler manifold and $\vp$ be a smooth solution of \eqref{CHE}. Suppose that $v\in C(X)$ satisfies $v\leq0$ and
\[
\left(1+\frac{\delta}{2}\right)\omega+\ddbar v \in \Gamma_{k}(X,\omega) \ \ \text{in the viscosity sense for some $\delta>0$}.
\]
For $p>n/k$, set $\gamma:=np/(kp-n)$. There exists a constant $C$ depending only on $p$, $k$, $n$ and $(X,\omega)$ such that
\[
\sup_{X}(v-\vp) \leq s_{0}+C\|f\|_{L^{p}}^{1/k} \cdot
\left( \frac{\|(v-\vp)_{+}\|_{L^{1}}}{s_{0}} \right)^{\frac{1}{2k\gamma}} \ \ \text{for $s_{0}\geq s_{*}$},
\]
where
\[
s_{*} \leq  \max\left\{ 2\delta\|v\|_{L^{\infty}}, \ C\delta^{-2k\gamma}\cdot\|f\|_{L^{p}}^{2\gamma}\cdot\|(v-\vp)_{+}\|_{L^{1}} \right\}.
\]
\end{theorem}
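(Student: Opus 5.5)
Following Guo--Phong \cite{GP24} and Cheng--Xu \cite{CX24}, I would compare $v-\vp$ with the solution of an auxiliary complex $k$-Hessian equation whose right-hand side is concentrated on the superlevel set of $v-\vp$. For $s\ge0$ set $\Omega_s:=\{v-\vp>s\}$ and $A_s:=\|(v-\vp-s)_+\|_{L^1}$. The target is a De Giorgi type inequality
\[
rA_r \le C\|f\|_{L^p}^{1/k}A_s^{1+\frac{1}{k\gamma}\cdot\frac12}\quad\text{for } r>0,\ s\ge s_*.
\]
Iterating this as in De Giorgi's lemma, starting at $s_0$ with $A_{s_0}\le \|(v-\vp)_+\|_{L^1}$, gives $\Omega_{s_0+d}=\emptyset$ for $d\sim C\|f\|_{L^p}^{1/k}(\|(v-\vp)_+\|_{L^1}/s_0)^{1/(2k\gamma)}$. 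This is the stated estimate.

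\textbf{Auxiliary equation.} Choose a smooth approximation $\tau_j$ of $(v-\vp-s)_+$, where $v$ is first replaced by smooth approximants. Solve
\[
\sigma_k(\omega+\ddbar\psi)=\frac{\tau_j(v-\vp-s)}{A_{s,j}}\,f\cdot\frac{\int f}{\int\tau_j f},\qquad \sup\psi=0,
\]
which is possible by Dinew--Ko\l odziej. Its right-hand side has $L^{p'}$-type entropy bounds, so Theorem \ref{zeroth order estimate} together with Hölder bounds $\|\psi\|_{L^\infty}$ through an $L^{1+\epsilon}$ norm of the density, giving $\|\psi\|_{L^\infty}\le C$. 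Alternatively, using the Guo--Phong energy bound $\int(-\psi)^{\gamma}$-type estimate, one gets $\int e^{\beta(-\psi)^{q}}\le C$ with $q$ tied to $\gamma$.

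\textbf{Comparison via maximum principle.} Consider
\[
\Phi:=-\epsilon(-\psi+\Lambda)^{\frac{k}{k+1}}+(v-\vp-s),
\]
with $\epsilon$ depending on $A_s$, $\|f\|_{L^p}$ and $\delta$. I want $\Phi\le0$. At an interior maximum in $\Omega_s$, the viscosity definition applied to $v$, with $\vp$ and $\psi$ smooth, gives an upper test function $P$. Using the linearized operator $L=\sigma_k^{i\bar j}(\omega_\vp)\partial_i\partial_{\bar j}$, the condition $(1+\delta/2)\omega+\ddbar P\in\Gamma_k$, and the concavity of $\sigma_k^{1/k}$ on $\Gamma_k$ (Gårding), we get
\[
\sigma_k^{1/k}(\omega+\ddbar P)\ \ge\ \ldots
\]
The extra $\delta\omega/2$ is the key difficulty. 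At a point where $v-\vp>s\ge s_*\ge 2\delta\|v\|_{L^\infty}$, I would replace $v$ by $\tilde v:=(1+\delta/2)^{-1}v$, which is $\omega$-$k$-subharmonic in the viscosity sense. Then $\tilde v-\vp\ge v-\vp-\delta\|v\|_\infty/2>s/2$, so nothing is lost on $\Omega_s$ after halving $s$. The same device as in Cheng--Xu then gives a contradiction: $\sigma_k(\omega+\ddbar(\tilde v+\epsilon\ldots))$ is controlled by the mixed Gårding inequality $\sigma_k^{1/k}$ of a sum $\ge$ the sum of $\sigma_k^{1/k}$. The upshot is
\[
(v-\vp-s)\le C\epsilon(-\psi+\Lambda)^{k/(k+1)},\qquad \epsilon^{k+1}\sim A_s\|f\|_{L^p}\cdot\delta^{-?}.
\]

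\textbf{Integrating.} Raise this bound to the power $(k+1)/k$ on $\Omega_s$ and integrate using the exponential integrability of $\psi$. With Hölder in $L^p$ applied to $f$, this yields
\[
\int_{\Omega_s}(v-\vp-s)^{\frac{k+1}{k}}\le C\|f\|_{L^p}^{1/k}A_s^{1/k}\,|\Omega_s|^{1-1/p'}\cdots.
\]
Combining with $|\Omega_{s+r}|\le A_s/r$ and Hölder gives the De Giorgi inequality above, with exponent $1+1/(2k\gamma)$. The threshold $s_*$ arises from two requirements. First, $\delta$-absorption needs $s\ge2\delta\|v\|_\infty$. Second, the term $\epsilon\,\delta^{-1}$ must be small, which needs $s\gtrsim\delta^{-2k\gamma}\|f\|_{L^p}^{2\gamma}A_0$.

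The main obstacle is the maximum principle step with only viscosity regularity of $v$ and a nonzero $\delta$. Bookkeeping the exponents to land exactly on $1/(2k\gamma)$ is secondary. I would handle the viscosity issue by first approximating $v$ by smooth $(1+\delta)\omega$-$k$-subharmonic functions, e.g.\ via Richberg/Plis-type regularization, which is valid on compact Kähler manifolds with a small loss in $\delta$. I would then pass to the limit, since all constants are independent of smoothness.
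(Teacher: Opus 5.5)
Your proposal has a genuine gap at the step you flag yourself, the handling of $\delta$, and the fix you choose goes in the wrong direction. (The paper does not reprove this statement; it quotes Cheng--Xu's Lemmas 3.6--3.7, a Guo--Phong type argument.)

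\textbf{The $\delta$ step.} After replacing $v$ by $\tilde v=(1+\delta/2)^{-1}v$, you have $\omega+\ddbar\tilde v\in\Gamma_k$ with no room to spare, and the maximum-principle comparison can then not close. Take an interior maximum of $\tilde v-\vp-s-\epsilon h^{a}$, with $h=-\psi+\Lambda$ and $0<a<1$. The upper test function $P=\vp+\epsilon h^{a}+\mathrm{const}$ satisfies $\ddbar P\leq\ddbar\vp-t(\omega+\ddbar\psi)+t\omega$, where $t=\epsilon a h^{a-1}$. So all you obtain is $\sigma_k^{1/k}(\omega+\ddbar\vp+t\omega)\geq t\,\sigma_k^{1/k}(\omega+\ddbar\psi)$. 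The left side is not controlled by $Cf^{1/k}$: it can be arbitrarily large compared with $f^{1/k}$ when $\lambda(\omega+\ddbar\vp)$ has eigenvalues much smaller than $t$.

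In Guo--Phong's $L^\infty$ estimate ($v\equiv0$), the term $t\omega$ is absorbed because $\ddbar P\geq0$ at the touching point, so there is a full $\omega$ of room. For a general $\omega$-$k$-subharmonic $v$ the $\omega$'s in $v-\vp$ cancel and that room is gone.

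The missing idea is to scale $v$ by a factor \emph{smaller} than one. For example, for $\delta<1$, $\omega+\ddbar((1-\delta)v)-\frac{\delta}{2}\omega\in\Gamma_k$ in the viscosity sense.
\begin{itemize}
\item This costs a level shift of at most $\delta\|v\|_{L^\infty}$, which is where $s_*\geq2\delta\|v\|_{L^\infty}$ comes from.
\item It gains a margin $\frac{\delta}{2}\omega$ that absorbs $t\omega$ provided $\epsilon a\Lambda^{a-1}\leq\delta/2$.
\item That smallness condition forces the superlevel set to be small relative to $\delta$, which is the origin of the term $C\delta^{-2k\gamma}\|f\|_{L^p}^{2\gamma}\|(v-\vp)_+\|_{L^1}$ in $s_*$.
\end{itemize}
Compare $\theta=\tau-(1-\tau)\delta/2\geq\tau/2$ in Step 2 of the proof of Theorem \ref{stability estimate boundary}. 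Your remark that ``$\epsilon\delta^{-1}$ must be small'' has no counterpart in your construction, since $\delta$ has disappeared once you pass to $\tilde v$.

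\textbf{The integration step.} This is not justified as written either.
\begin{itemize}
\item The density $\tau_j(v-\vp-s)f/A_{s,j}$ of your auxiliary $k$-Hessian equation is normalized only in $L^1$. Theorem \ref{zeroth order estimate} needs an $L^q$ bound with $q>n/k$, not $L^{1+\epsilon}$, so you get no uniform bound on $\|\psi\|_{L^\infty}$. If you did, the comparison would already give $v-\vp-s\leq C\epsilon$ pointwise and no iteration would be needed.
\item $\omega$-$k$-subharmonic functions with $k<n$ are not uniformly exponentially integrable; locally $-|z|^{2-2n/k}$ is $k$-subharmonic.
\end{itemize}
This is why the Guo--Phong/Cheng--Xu argument uses an auxiliary complex Monge--Amp\`ere equation with density proportional to $\tau_j(\cdot)f^{n/k}$, as does Step 2 of the paper's boundary proof. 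It links that equation to $\sigma_k$ via Maclaurin's inequality $\sigma_k^{1/k}\geq c_{n,k}\sigma_n^{1/n}$. It then integrates using the uniform $\alpha$-invariant $\int_X e^{-\alpha\psi}\omega^n\leq C$ for $\omega$-psh $\psi$ with $\sup\psi=0$. This is consistent with the paper's parameter choice $e^{F}=f^{1/k}$, $p_0=kp/n>1$.

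\textbf{Two smaller points.}
\begin{itemize}
\item To produce the factor $(\|(v-\vp)_+\|_{L^1}/s_0)^{1/(2k\gamma)}$, run the De Giorgi iteration on the volume of $\{v-\vp>s\}$, starting from Chebyshev's bound $\|(v-\vp)_+\|_{L^1}/s_0$, rather than on $A_s$.
\item No smoothing of $v$ is needed. Only $\vp$ and $\psi$ enter the test function at the maximum point, so the viscosity formulation suffices directly.
\end{itemize}
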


\begin{proof}
This is \cite[Lemma 3.6 and 3.7]{CX24} with the choice of
\[
e^{F} = f^{1/k}, \ \ p_{0} = \frac{kp}{n}, \ \ q_{0} = \frac{kp}{kp-n}, \ \ \mu = \frac{1}{2nq_{0}}, \ \ \beta = 2,
\]
and with the dependence of the constant on $f$ made explicit.
\end{proof}

\begin{remark}
If one follows the argument of Theorem \ref{zeroth order estimate}, it seems to be tedious to trace the dependence of $C$ on $\|f\|_{L^{p}}$. Here for convenience, let us derive this dependence from Theorem \ref{stability estimate}. Choosing $v=0$, $\delta=1$ and $s_{0}=s_{*}$,
\[
\sup_{X}|\vp| \leq C\|f\|_{L^{p}}^{2\gamma}\cdot\|\vp\|_{L^{1}}+C.
\]
By Green's formula, one has $\|\vp\|_{L^{1}}\leq C_{G}$ for some constant $C_{G}$ depending only on $(X,\omega)$, it then follows that
\[
\sup_{X}|\vp| \leq C_{0}\big(1+\|f\|_{L^{p}}\big)^{2\gamma}.
\]
\end{remark}

\begin{remark}\label{K 0 K 1}
For notational convenience and later use, denote
\[
K_{0} := C_{0}\big(1+\|f\|_{L^{p}}\big)^{2\gamma}, \ \
C_{1} := A_{n}(C_{\BK}+1), \ \
K_{1} := \exp\Big(C_{1}C_{0}(1+\|f\|_{L^{p}})^{2\gamma}+C_{1}\Big).
\]
Then $\sup_{X}|\vp| \leq K_{0}$ and
\[
\sup_{X}|\de\dbar \vp| \leq K_{1}\left(\sup_{X}|\de\vp|^{2}+\|f\|_{L^{\infty}}+\sup_{X}|\de f^{1/k}|^{2}+\sup_{X}|\de\dbar f^{1/k}|+1\right).
\]
\end{remark}

\section{Modulus of continuity estimate}\label{sec:modulus of continuity estimate}

In this section, we establish the modulus of continuity estimate.

\begin{theorem}\label{modulus of continuity estimate}
Let $(X,\omega)$ be an $n$-dimensional compact K\"ahler manifold and $\varphi$ be a smooth solution of \eqref{CHE}. For $p>n/k$, there exist constants $\ve_{0}\in(0,1)$ and $C$ depending only on $p$, $k$, $n$ and $(X,\omega)$ such that for $d(x,y)<\ve_{0}$,
\[
|\vp(x)-\vp(y)| \leq CK_{0}^{2}\big(-\log d(x,y)\big)^{-1},
\]
where $K_{0}=C_{0}\big(1+\|f\|_{L^{p}}\big)^{2\gamma}$ is the constant in Remark \ref{K 0 K 1}.
\end{theorem}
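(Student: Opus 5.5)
We will combine a logarithmic sup-convolution of $\vp$ with the stability estimate of Theorem \ref{stability estimate}, taking $v$ to be a shifted version of that sup-convolution. Fix a small $\ve\in(0,\ve_0)$ and define
\[
\vp_{\ve}(x) := \sup_{y\in X}\Big( \vp(y) - K\,\frac{\log\big(1+d(x,y)^{2}/\ve^{2}\big)}{\log(1/\ve)} \Big)
\]
or a close variant, with $K\approx 2K_{0}$. In the paper's definition \eqref{def of vp ve} the distance should be replaced by a smooth function $\rho(x,y)$ comparable to $d^{2}$ near the diagonal (for instance the Calabi function in normal coordinates), and all weights should be tuned as needed. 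Since $\osc\vp\leq 2K_0$, the supremum is attained at some $y$ with $d(x,y)\leq \ve^{1/2}$, or with a similar radius $r(\ve)\to0$.

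\textbf{Step 1: $\vp_\ve$ is almost $k$-subharmonic in the viscosity sense.} Let $P$ be an upper test function of $\vp_\ve$ at $x$, and let $y$ be the maximizing point. Then $z\mapsto P(z)+K\log(1+\rho(z,y)/\ve^2)/\log(1/\ve)$ is an upper test function for $\vp$ at the point $x$ shifted toward $y$. More precisely, one translates along the geodesic and uses $\omega+\ddbar\vp\in\Gamma_k$ at $y$.

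The logarithmic kernel is the key advantage here. The function $\log(1+|z|^2/\ve^2)$ is plurisubharmonic on $\mathbb C^n$, and its complex Hessian is bounded by $C/(|z|^2+\ve^2)$. After dividing by $\log(1/\ve)$, the error terms coming from the non-flatness of $\omega$ (the curvature terms in $\ddbar_x\rho$ versus $\ddbar_y\rho$) are of size $C\rho\cdot\ve^{-2}(1+\rho/\ve^2)^{-1}/\log(1/\ve)\leq CK/\log(1/\ve)$. Hence
\[
\Big(1+\tfrac{\delta}{2}\Big)\omega+\ddbar\vp_\ve\in\Gamma_k \quad\text{in the viscosity sense, with } \delta = \frac{CK_0}{\log(1/\ve)}.
\]
No curvature sign is needed, because the kernel's pluriharmonic-type structure absorbs the comparison of $\omega(x)$ with $\omega(y)$ up to $O(\rho)$. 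This is the step I expect to be the main obstacle. One must handle the coupling of the $x$ and $y$ variables carefully, choosing either the doubling-of-variables form of the viscosity argument or a local holomorphic-coordinate transport $z\mapsto z+(y-x)$ in a chart where $\omega$ is Euclidean to second order. The goal is to check that all errors are $O(K_0/\log(1/\ve))$ uniformly.

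\textbf{Step 2: apply Theorem \ref{stability estimate}.} Set $v:=\vp_\ve-\sup_X\vp_\ve\leq0$, which satisfies $\|v\|_{L^\infty}\leq 2K_0$. We then need an $L^1$ bound on $(v-\vp)_+$, and for this we use $\vp_\ve-\vp\geq0$ together with the bound
\[
\int_X(\vp_\ve-\vp)\leq C K_0\,(\log(1/\ve))^{-1}.
\]
To prove this $L^1$ bound, note that $\vp_\ve(x)-\vp(x)\leq \sup_{d(x,y)\leq r}(\vp(y)-\vp(x))$. Integrating and using the Laplacian bound $\Delta\vp\geq -n$ (since $\omega+\ddbar\vp\in\Gamma_1$) with Green's formula gives $\int_X \big(\sup_{B_r(x)}\vp-\vp(x)\big)\leq Cr^{2}\,\cdot$(a suitable mass bound) plus a logarithmic tail. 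Alternatively, one can split the supremum dyadically in $d(x,y)$: the penalty $K\log(1+d^2/\ve^2)/\log(1/\ve)$ beats $\osc\vp$ unless $d\lesssim\ve^{1/2}$, and mean-value inequalities for quasi-subharmonic functions control the $L^1$ average of $\sup_{B_r}\vp-\vp$ by $Cr^{2}(1+K_0)$. The shift $\sup_X\vp_\ve$ lies between $0$ and $\sup\vp=0$ plus a small amount, so it causes no trouble.

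\textbf{Step 3: conclude.} Choosing $s_0=\max\{s_*,\,CK_0^2/\log(1/\ve)\}$, Theorem \ref{stability estimate} gives
\[
\sup_X(\vp_\ve-\vp)\leq CK_0^2(\log(1/\ve))^{-1}.
\]
Here $\|f\|_{L^p}^{1/k}$ and $\|f\|_{L^p}^{2\gamma}$ are absorbed into $K_0$ powers, and the factor $\delta^{-2k\gamma}$ is compensated because the $L^1$ norm carries only a power of $\ve$ rather than $1/\log$; this bookkeeping should be rechecked. Finally, for $d(x,y)=\ve$ we have
\[
\vp(y)-\vp(x)\leq \vp_\ve(x)-\vp(x)+K\log 2/\log(1/\ve)\leq CK_0^2(-\log d(x,y))^{-1},
\]
and symmetry in $x,y$ gives the claimed estimate.
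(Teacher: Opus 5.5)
Your overall architecture matches the paper's: the logarithmic sup-convolution, viscosity almost-$k$-subharmonicity with $\delta\sim K_0(-\log\ve)^{-1}$, Theorem \ref{stability estimate} applied to $v=\vp_\ve$, and the comparison $\vp(y)\le\vp_\ve(x)+2K_0(-\log\ve)^{-1}$. No shift of $v$ is needed, since $\sup_X\vp_\ve=0$ exactly.

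\textbf{The genuine gap is Step 2.} The bound you set out to prove, $\|\vp_\ve-\vp\|_{L^1}\le CK_0(\log(1/\ve))^{-1}$, cannot give the stated rate. With $\delta\sim K_0(\log(1/\ve))^{-1}$, the second entry of $s_*$ is of order
$\delta^{-2k\gamma}\|f\|_{L^p}^{2\gamma}K_0(\log(1/\ve))^{-1}\sim(\log(1/\ve))^{2k\gamma-1}$.
This tends to infinity because $2k\gamma>1$.

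You therefore need a power of $\ve$, as you suspect in Step 3. Your justification for it fails, though. Sub-mean-value inequalities and Green's formula control \emph{averages} over balls, not suprema. The asserted bound $\int_X(\sup_{B_r(x)}\vp-\vp(x))\le Cr^{2}(1+K_0)$ is false for every non-constant smooth $\vp$: wherever $|\nabla\vp|\ge c>0$, one has $\sup_{B_r(x)}\vp-\vp(x)\ge cr/2$ for small $r$.

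The missing idea is a two-scale argument.
\begin{enumerate}
\item Let $y_x$ be the maximizing point, so $d(x,y_x)\le\ve^{1/2}$ and $\vp_\ve(x)\le\vp(y_x)$.
\item Apply the sub-mean-value inequality for $\vp+\rho$ (where locally $\omega=\ddbar\rho$) at $y_x$ on the \emph{larger} ball $B(y_x,\ve^{1/4})$. This costs $CK_0\ve^{1/4}$.
\item Since $\vp\le0$, you may shrink the domain of integration to $B(x,\ve^{1/4}-\ve^{1/2})\subset B(y_x,\ve^{1/4})$, which is now centered at $x$.
\item Integrate in $x$ and use Fubini. This gives $\int_X\vp_\ve\,\omega^n\le\int_X\vp\,\Theta_\ve\,\omega^n+CK_0\ve^{1/4}$ with $|\Theta_\ve-1|\le C\ve^{1/4}$, hence $\|\vp_\ve-\vp\|_{L^1}\le CK_0\ve^{1/4}$.
\end{enumerate}
With this bound, your Step 3 bookkeeping closes, using $\|f\|_{L^p}^{2\gamma}\le K_0$ and $\|f\|_{L^p}^{1/k}\le K_0$.

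\textbf{A secondary issue in Step 1.} A plain coordinate translation $z\mapsto z+(y-x)$ in a chart normal at $x$ does not work as stated. Under it, $(\Phi^*\omega)(x)$ carries the matrix of $\omega(y)$, which differs from $\omega(x)$ by $O(\ve)$. For $k<n$ the cone $\Gamma_k$ is not preserved by congruences $A\mapsto MAM^{*}$ with $M$ near the identity. The mismatch is therefore multiplied by $\ddbar\vp(y)$, which has no a priori bound, so it cannot be absorbed into $\delta$.

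The paper instead takes $\Phi(z)=y_0+N\cdot z$, with $N$ normalized so that $(\Phi^*\omega)(x_0)=\omega(x_0)$ exactly. It then uses \cite[Lemmas 2.9, 2.11]{CX24} to obtain a vector field $\xi(z)$ with $\exp_z\xi(z)=\Phi(z)$ and $|\nabla\xi|+|\nabla^2\xi|\le Cs_0$ at $x_0$. This yields $\ddbar s\le Cs_0\,\omega$. Dropping the concavity term $\Psi_\ve''(s)\sqrt{-1}\de s\wedge\dbar s\le0$, one recovers exactly your error bound $\Psi_\ve'(s_0)\,Cs_0\le CK_0(-\log\ve)^{-1}$.
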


\subsection{Logarithmic sup-convolution approximation}

For $\ve\in(0,1)$, define one-variable increasing concave function:
\[
\Psi_{\ve}(s) := \frac{K_{0}}{\log(1+\ve^{-1})}\log\left(1+\frac{s}{\ve^{2}}\right) \ \ s \geq 0,
\]
and the logarithmic sup-convolution approximation:
\begin{equation}\label{def of vp ve}
\vp_{\ve}(x) := \sup_{\xi\in T_{x}X}\Big\{\vp(\exp_{x}\xi)-\Psi_{\ve}(|\xi|_{x}^{2})\Big\}.
\end{equation}

The following lemma shows some elementary properties of $\vp_{\ve}$, which follow from the definition directly.

\begin{lemma}\label{elementary properties}
The following holds:
\begin{enumerate}\setlength{\itemsep}{1mm}
\item[(i)] $-K_{0}\leq\vp\leq \vp_{\ve}\leq0$;
\item[(ii)] each maximizing vector $\xi_{x}$ in \eqref{def of vp ve} satisfies $|\xi_{x}|_{x}^{2}\leq \ve$;
\item[(iii)] if $d(x,y)\leq\ve$, then $\vp(y)\leq \vp_{\ve}(x)+2K_{0}(-\log\ve)^{-1}$.
\end{enumerate}
\end{lemma}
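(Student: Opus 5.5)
The three properties follow from the definition \eqref{def of vp ve}, the bound $\sup_X|\vp|\leq K_0$ from Remark \ref{K 0 K 1}, and $\sup_X\vp=0$. The one point to watch is that $\exp_x$ is defined on all of $T_xX$, since $X$ is compact and hence complete. Also $\Psi_\ve\geq0$ is increasing with $\Psi_\ve(0)=0$.

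For (i), I take $\xi=0$ in the supremum to get $\vp_\ve(x)\geq\vp(x)\geq -K_0$. For the upper bound, $\vp\leq0$ and $\Psi_\ve\geq0$ give $\vp(\exp_x\xi)-\Psi_\ve(|\xi|_x^2)\leq 0$ for every $\xi$, hence $\vp_\ve\leq 0$.

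For (ii), I first note that a maximizer exists. The function $\xi\mapsto \vp(\exp_x\xi)-\Psi_\ve(|\xi|_x^2)$ is continuous, and it tends to $-\infty$ as $|\xi|\to\infty$ because $\vp$ is bounded. At a maximizer $\xi_x$, comparing with $\xi=0$ gives
\[
\Psi_\ve(|\xi_x|^2)\leq \vp(\exp_x\xi_x)-\vp(x)\leq 0-(-K_0)=K_0 .
\]
Since $\Psi_\ve(\ve)=K_0\log(1+\ve^{-1})/\log(1+\ve^{-1})=K_0$, this only yields $|\xi_x|^2\leq\ve$ via monotonicity. To reach the stated bound $\leq\ve$ it therefore suffices to use that $\Psi_\ve$ is strictly increasing together with the non-strict inequality. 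This is exactly the design of $\Psi_\ve$: $\Psi_\ve(\ve)=K_0$.

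For (iii), given $y$ with $d(x,y)\leq\ve$, I choose a minimizing geodesic from $x$ to $y$, which exists by completeness. This gives $\xi\in T_xX$ with $\exp_x\xi=y$ and $|\xi|_x=d(x,y)\leq\ve$, so $|\xi|^2\leq\ve^2$. Then
\[
\vp(y)\leq \vp_\ve(x)+\Psi_\ve(\ve^2)=\vp_\ve(x)+\frac{K_0\log 2}{\log(1+\ve^{-1})}.
\]
Since $\log(1+\ve^{-1})\geq -\log\ve$ and $\log2<2$, this is at most $\vp_\ve(x)+2K_0(-\log\ve)^{-1}$.

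None of these steps is a real obstacle. The only points needing care are the existence of a maximizer and of a minimizing geodesic, both of which come from compactness of $X$ via Hopf–Rinow.
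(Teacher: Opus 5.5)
Your proposal is correct and follows the paper's proof step for step. You use $\xi=0$ as the competitor in (i) and (ii), together with $\Psi_\ve(\ve)=K_0$ and strict monotonicity of $\Psi_\ve$. In (iii) you bound $\Psi_\ve(\ve^2)=K_0\log 2/\log(1+\ve^{-1})\leq 2K_0(-\log\ve)^{-1}$. Your remarks on the existence of a maximizer and of a minimizing geodesic (via Hopf--Rinow) only make explicit what the paper leaves implicit.

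The sentence in (ii) saying the comparison ``only yields'' the bound is muddled wording, not a gap. The inequality $\Psi_\ve(|\xi_x|^2)\leq K_0=\Psi_\ve(\ve)$ already gives exactly $|\xi_x|^2\leq\ve$.
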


\begin{proof}

Using $\Psi_{\ve}\geq0$ and $\vp\leq0$, and choosing $\xi=0$ in \eqref{def of vp ve}, we obtain (i). For (ii), choosing $\xi=0$ in \eqref{def of vp ve} again,
\[
\vp(\exp_{x}\xi_{x})-\Psi_{\ve}(|\xi_{x}|_{x}^{2}) \geq \vp(\exp_{x}0)-\Psi_{\ve}(0) = \vp(x).
\]
Together with $-K_{0}\leq \vp\leq0$,
\[
\frac{K_{0}}{\log(1+\ve^{-1})}\log\left(1+\frac{|\xi_{x}|_{x}^{2}}{\ve^{2}}\right) = \Psi_{\ve}(|\xi_{x}|_{x}^{2}) \leq -\vp(x) \leq K_{0},
\]
which implies (ii). For (iii), let $\xi\in T_{x}X$ be the vector such that $y=\exp_{x}\xi$. Then $|\xi|_{x}=d(x,y)\leq\ve$ and so
\[
\vp_{\ve}(x) \geq \vp(\exp_{x}\xi)-\Psi_{\ve}(|\xi|_{x}^{2})
\geq \vp(y)-\Psi_{\ve}(\ve^{2})
= \vp(y)-\frac{K_{0}\log 2}{\log(1+\ve^{-1})},
\]
which implies (iii).
\end{proof}

\subsection{$k$-subharmonicity and $L^{1}$ approximation}

\begin{proposition}\label{k-subharmonicity and L1 approximation}
There exist constants $\ve_{0}\in(0,1)$ and $C$ depending only on $k$, $n$ and $(X,\omega)$ such that the following holds. For $\ve\in(0,\ve_{0})$, one has
\begin{enumerate}\setlength{\itemsep}{1mm}
\item[(i)] $(1+K_{0}\theta_{\ve})\omega+\ddbar\vp_{\ve}\in\Gamma_{k}(X,\omega)$ in the viscosity sense, where $\theta_{\ve}:=C(-\log\ve)^{-1}$;
\item[(ii)] $\|\vp_{\ve}-\vp\|_{L^{1}}\leq CK_{0}\ve^{1/4}$.
\end{enumerate}
\end{proposition}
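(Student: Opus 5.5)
For (i) I will use the standard viscosity argument for sup-convolutions: transport an upper test function of $\vp_{\ve}$ to the point where the supremum is attained, and compare Hessians there. Fix $x_{0}\in X$ and an upper test function $P$ of $\vp_{\ve}$ at $x_{0}$. Let $\xi_{0}$ be a maximizing vector for $\vp_{\ve}(x_{0})$ and set $y_{0}=\exp_{x_{0}}\xi_{0}$. By Lemma \ref{elementary properties}(ii), $|\xi_{0}|^{2}\leq\ve$, so for $\ve_{0}$ below the injectivity radius, $y_{0}$ lies in a normal neighbourhood of $x_{0}$.

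Near $x_{0}$ I define a smooth family of vectors $\xi(x)\in T_{x}X$ with $\exp_{x}\xi(x)=\Phi(x)$, where $\Phi$ is a local biholomorphism-like map with $\Phi(x_{0})=y_{0}$. Concretely, work in holomorphic normal coordinates at $x_{0}$ and let $\Phi(x)=x+(y_{0}-x_{0})$ be a translation in these coordinates. Then set $\xi(x)=\exp_{x}^{-1}\Phi(x)$, so that $|\xi(x)|_{x}=d(x,\Phi(x))$. By definition,
\[
P(x)\geq \vp_{\ve}(x)\geq \vp(\Phi(x))-\Psi_{\ve}(d^{2}(x,\Phi(x))),
\]
with equality at $x_{0}$. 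Hence $Q:=P\circ\Phi^{-1}+\Psi_{\ve}(d^{2}(\Phi^{-1}(\cdot),\cdot))$ is an upper test function for the smooth function $\vp$ at $y_{0}$. Since $\vp$ is smooth, $\lambda(\omega+\ddbar\vp)(y_{0})\in\Gamma_{k}$ and $\ddbar Q(y_{0})\geq\ddbar\vp(y_{0})$, so $\lambda(\omega+\ddbar Q)(y_{0})\in\Gamma_{k}$.

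It remains to transfer this back to $x_{0}$. This requires two facts:
\begin{itemize}
\item[(a)] $\Phi$ is holomorphic in the coordinates, with $\omega$ compared at $x_{0},y_{0}$ up to a factor $1+O(|\xi_{0}|)$.
\item[(b)] $\ddbar_{x}\big[\Psi_{\ve}(d^{2}(x,\Phi(x)))\big]\geq -K_{0}\theta_{\ve}\omega$ at $x_{0}$.
\end{itemize}
I expect (b) to be the main obstacle. Write $g(x)=d^{2}(x,\Phi(x))$. Since the distance is only preserved up to curvature, one gets $|\de g|\leq C|\xi_{0}|^{2}$ and $|\ddbar g|\leq C|\xi_{0}|^{2}$, because $g$ vanishes to second order in $\xi$ along the translation family. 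Then
\[
\ddbar\Psi_{\ve}(g)=\Psi_{\ve}'\ddbar g+\Psi_{\ve}''\,\de g\wedge\dbar g\geq-\Psi_{\ve}'(g)\,C g\,\omega ,
\]
using concavity only as a lower bound; more precisely, the term $\Psi''|\de g|^{2}$ is $\geq -C\Psi''g^{2}$. The key logarithmic feature is
\[
s\Psi_{\ve}'(s)=\frac{K_{0}}{\log(1+\ve^{-1})}\cdot\frac{s}{\ve^{2}+s}\leq\frac{K_{0}}{\log(1+\ve^{-1})},
\]
and likewise $s^{2}|\Psi_{\ve}''(s)|\leq K_{0}/\log(1+\ve^{-1})$. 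Together these give an error of at most $CK_{0}(-\log\ve)^{-1}\omega$, independent of any curvature sign. This scale invariance is precisely why the logarithmic kernel replaces the quadratic one of Cheng--Xu. Combining this with the $O(|\xi_{0}|)\leq O(\ve^{1/2})$ metric distortion (absorbed, since $\ve^{1/2}\leq(-\log\ve)^{-1}$) and with $\Gamma_{k}$ being an open convex cone containing $\omega$ yields (i). If the translation is not exactly holomorphic relative to $\omega$, I would instead use the holomorphic normal coordinate translation, which is holomorphic, and absorb the resulting $O(|\xi_{0}|)$ error.

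For (ii), $\vp_{\ve}\geq\vp$, so I need $\int(\vp_{\ve}-\vp)\leq CK_{0}\ve^{1/4}$. At each $x$, $\vp_{\ve}(x)-\vp(x)\leq\vp(\exp_{x}\xi_{x})-\vp(x)$ with $|\xi_{x}|\leq\ve^{1/2}$, and the maximizer also satisfies $\Psi_{\ve}(|\xi_{x}|^{2})\leq\vp(\exp_{x}\xi_{x})-\vp(x)$. I would bound $\int\sup_{|\xi|\leq\ve^{1/2}}(\vp(\exp_{x}\xi)-\vp(x))$ using that $\vp$ is $\omega$-subharmonic (since $k\geq1$), i.e. $\Delta\vp\geq-n$, and that $\int|\de\vp|\leq C(1+\sup|\vp|)\leq CK_{0}$ by integrating $\Delta\vp+n\geq0$ against $\vp$ and using Green's formula. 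A crude route is to split into the cases $|\xi_{x}|^{2}\leq\ve^{3/2}$ and $|\xi_{x}|^{2}>\ve^{3/2}$:
\begin{itemize}
\item In the first case, subharmonicity (sub-mean-value over balls of radius $\ve^{3/4}$, plus the $L^{1}$ gradient bound) controls the integral by $CK_{0}\ve^{1/4}$-type quantities.
\item In the second case, $\Psi_{\ve}\geq K_{0}\log(1+\ve^{-1/2})/\log(1+\ve^{-1})\approx K_{0}/2$.
\end{itemize}
The second case is problematic as it stands. So rather than this split, I would directly use the sub-mean-value property: $\vp(\exp_{x}\xi)\leq$ the average of $\vp$ over $B(\exp_{x}\xi,r)+Cr^{2}$. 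I would then compare averages over nearby balls via $\|\de\vp\|_{L^{1}}$, choosing $r=\ve^{1/4}$, which gives $CK_{0}\ve^{1/4}$ after integrating in $x$.
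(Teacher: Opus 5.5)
\textbf{Part (i): a genuine gap.} Your plan for (i) is the paper's. You transport through a holomorphic $\Phi$ with $\Phi(x_0)=y_0$, compare Hessians at the maximizing point, and use that $s\Psi_\ve'(s)\le K_0/\log(1+\ve^{-1})$ is small independently of curvature. Your Taylor argument for $|\ddbar g|(x_0)\le C|\xi_0|^2$ is a good elementary substitute for \cite[Lemma 2.11]{CX24}; it works because $(x,v)\mapsto d^2(x,x+v)$ is smooth and vanishes to second order at $v=0$.

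The gap is the choice of $\Phi$ as a plain translation in holomorphic normal coordinates, combined with the claim that the resulting metric distortion can be absorbed. The comparison gives, at $x_0$,
\[
\Phi^*\omega(x_0)+\ddbar P(x_0)+\ddbar\Psi_\ve(g)(x_0)\ \geq\ \Phi^*(\omega+\ddbar\vp)(x_0).
\]
The right-hand side is known to have eigenvalues in $\Gamma_k$ only \emph{with respect to the reference metric} $\Phi^*\omega(x_0)$. For a translation this metric is $\omega(y_0)$ written in the coordinates, which is not $\omega(x_0)$. For $k<n$ the cone depends on the reference metric, and the discrepancy scales with the size of the form rather than being additive.

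For example, take $n=2$, $k=1$, $\omega(x_0)=\mathrm{diag}(1,1)$ and $\Phi^*\omega(x_0)=\mathrm{diag}(1+\eta,1-\eta)$. The form $\mathrm{diag}(-M(1+\eta)+\delta,\,M(1-\eta))$ has trace $\delta/(1+\eta)>0$ with respect to the second metric, but trace $\delta-2\eta M$ with respect to the first. Since $\ddbar\vp(y_0)$ has no bound in terms of $k,n,(X,\omega)$, no bounded multiple of $\omega$ (let alone $C\ve^{1/2}\omega$) repairs this. Openness of $\Gamma_k$ does not help either, since the normalized eigenvalue vector can be arbitrarily close to $\de\Gamma_k$. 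The distortion of the $\omega$ term itself is harmless; what matters is the reference metric of the cone, and holomorphicity of $\Phi$ is beside the point.

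The missing idea is the normalization of \cite{CX24} that the paper uses. Take $\Phi(z)=y_0+N\cdot z$ with $N$ chosen so that $\Phi^*\omega(x_0)=\omega(x_0)$ \emph{exactly}. Then $\lambda(\Phi^*(\omega+\ddbar\vp))(x_0)=\lambda(\omega+\ddbar\vp)(y_0)\in\Gamma_k$ with no error at all. Your Taylor bound on $\ddbar g$ survives this change, because $N-I=O(|\xi_0|^2)$ in holomorphic normal coordinates.

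\textbf{Two smaller points.} First, (b) is stated in the wrong direction. To pass from the displayed inequality to $(1+K_0\theta_\ve)\omega+\ddbar P\in\Gamma_k$ you need the \emph{upper} bound $\ddbar\Psi_\ve(g)(x_0)\le CK_0(-\log\ve)^{-1}\omega$. Here concavity is an asset: the term $\Psi_\ve''\sqrt{-1}\de g\wedge\dbar g\le 0$ is simply dropped, leaving $\Psi_\ve'(s_0)\ddbar g(x_0)\le C s_0\Psi_\ve'(s_0)\,\omega$. Your two-sided bounds cover this, so it is a slip rather than a gap.

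Second, your final route for (ii) works. You apply the sub-mean-value inequality at $y_x$ with $r=\ve^{1/4}$ (its error on a curved manifold carries a factor $\sup|\vp|$, as in the paper's $CK_0r$). You then recentre the ball at $x$ using $\|\de\vp\|_{L^1}\le CK_0$ and the uniform bound $d(x,y_x)\le\ve^{1/2}$, and finish with Fubini. The paper recentres more cheaply: since $\vp\le 0$ and $B(x,\ve^{1/4}-\ve^{1/2})\subset B(y_x,\ve^{1/4})$, shrinking the domain only increases the integral. After Fubini, the normalization defect $|\Theta_\ve-1|\le C\ve^{1/4}$ costs $CK_0\ve^{1/4}$, so no gradient bound is needed.
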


\begin{proof}
For (i), fix $x_{0}\in X$ and let $P\in C^{2}(X)$ be a upper test function of $\vp_{\ve}$ at $x_{0}$, i.e.
\begin{equation}\label{upper test function}
P(x_{0}) = \vp_{\ve}(x_{0}) \ \ \text{and} \ \
P \geq \vp_{\ve} \ \text{near $x_{0}$}.
\end{equation}
Let $\xi_{0}$ be a maximizing vector in \eqref{def of vp ve} and set
\[
y_{0}=\exp_{x_{0}}\xi_{0}, \ \ \ s_{0} = |\xi_{0}|_{x_{0}}^{2}.
\]
Choose normal holomorphic coordinates $(U,\{z^{i}\}_{i}^{n})$ centered at $x_{0}$ such that $B(x_{0},\ve)\subset U$. Following \cite[(2.3) and (2.4)]{CX24}, fix a Hermitian matrix $N=(N_{i\ov{j}})$ such that
\[
\sum_{a,b}N_{i\ov{a}}\,\ov{N_{j\ov{b}}}\,g_{a\ov{b}}(y_{0}) = \delta_{ij} = g_{i\ov{j}}(x_{0}),
\]
and define a local holomorphic map
\[
\Phi(z) := y_{0}+N \cdot z.
\]
It then follows that
\begin{equation}\label{pullback omega}
(\Phi^{*}\omega)(x_{0})=(\Phi^{*}\omega)(0)=\omega(x_{0}).
\end{equation}
By \cite[Lemma 2.9]{CX24}, there exists a smooth vector field
\[
z \mapsto \xi(z) \in T_{z}X
\]
such that
\[
\xi(x_{0}) = \xi_{0}, \ \ \ \exp_{z}(\xi(z)) = \Phi(z).
\]
Set $s(z):=|\xi(z)|_{z}^{2}$. By \eqref{upper test function} and \eqref{def of vp ve},
\[
P \geq \vp_{\ve} \geq \vp\circ\Phi-\Psi_{\ve}(s) \ \ \text{near $x_{0}$}
\]
with the equality holds at $x_{0}$. Then the maximum principle shows
\[
\Big(\ddbar P+\ddbar\Psi_{\ve}(s)\Big)(x_{0}) \geq \ddbar(\vp\circ\Phi)(x_{0})
= (\Phi^{*}\ddbar \vp)(x_{0}).
\]
Together with \eqref{pullback omega},
\begin{equation}\label{omega ddbar P ddbar Psi}
\Big(\omega+\ddbar P+\ddbar\Psi_{\ve}(s)\Big)(x_{0}) \geq \Phi^{*}(\omega+\ddbar \vp)(x_{0}).
\end{equation}
We claim that
\begin{equation}\label{k-subharmonicity claim}
\ddbar\Psi_{\ve}(s)(x_{0}) \leq CK_{0}(-\log\ve)^{-1}\omega(x_{0}).
\end{equation}
Given this claim, setting $\theta_{\ve}:=C(-\log\ve)^{-1}$ and using \eqref{omega ddbar P ddbar Psi}, we obtain
\[
\Big((1+K_{0}\theta_{\ve})\omega+\ddbar P \Big)(x_{0}) \geq \Phi^{*}(\omega+\ddbar \vp)(x_{0}).
\]
Since $\omega+\ddbar \vp\in\Gamma_{k}(X,\omega)$, then
\[
\lambda\Big(\Phi^{*}(\omega+\ddbar\vp)\Big)(x_{0}) = \lambda(\omega+\ddbar \vp)(y_{0}) \in \Gamma_{k}
\]
and so
\[
\lambda\Big((1+K_{0}\theta_{\ve})\omega+\ddbar P\Big)(x_{0})  \in \Gamma_{k}
\]
as required.

\medskip

Next we prove the claim \eqref{k-subharmonicity claim}. Direct calculation shows
\[
\Psi_{\ve}'(s) = \frac{K_{0}}{\log(1+\ve^{-1})}\cdot\frac{1}{(s+\ve^{2})}, \ \
\Psi_{\ve}''(s) = -\frac{K_{0}}{\log(1+\ve^{-1})}\cdot\frac{1}{(s+\ve^{2})^{2}}.
\]
Using $\Psi_{\ve}''(s)<0$,
\[
\ddbar\Psi_{\ve}(s) = \Psi_{\ve}'(s)\ddbar s+\Psi_{\ve}''(s)\sqrt{-1}\de s\wedge\dbar s \leq \Psi_{\ve}'(s)\ddbar s.
\]
Set $s_{0}:=s(0)=|\xi_{0}|_{x_{0}}^{2}$. By \cite[Lemma 2.11]{CX24}, one has
\[
|\nabla\xi|(x_{0})+|\nabla^{2}\xi|(x_{0}) \leq C|\xi_{0}|_{x_{0}}^{2} = Cs_{0},
\]
which implies
\[
s_{i\ov{j}}(x_{0}) = \de_{i}\de_{\ov{j}}g_{k\ov{l}}\cdot\xi^{k}\ov{\xi^{l}}(x_{0})+\delta_{k\ov{l}}\de_{i}\de_{\ov{j}}(\xi^{k}\ov{\xi^{l}})(x_{0})
\leq Cs_{0}.
\]
Then we have $\ddbar s(x_{0})\leq Cs_{0}\omega(x_{0})$ and so
\[
\ddbar\Psi_{\ve}(s)(x_{0}) \leq \frac{K_{0}}{\log(1+\ve^{-1})}\cdot\frac{Cs_{0}}{s_{0}+\ve^{2}} \cdot \omega(x_{0}) \leq CK_{0}(-\log\ve)^{-1}\omega(x_{0}),
\]
as required.

\medskip

For (ii), fix $r_{0}\in(0,1)$ such that for any $x\in X$, $B(x,r_{0})$ is contained in the normal holomorphic coordinate and $\omega=\ddbar\rho$ with $\|\rho\|_{C^{1}}\leq 1$. Since $\omega+\ddbar\vp\in\Gamma_{k}\subset\Gamma_{1}$, then
\[
\Delta (\vp+\rho) = \mathrm{tr}_{\omega}(\omega+\ddbar\vp) > 0.
\]
Applying \cite[Proposition 2.16]{CX24} to $\vp+\rho$, we see that for $r\in(0,r_{0})$,
\[
(\vp+\rho)(x) \leq \frac{1}{\alpha_{2n}r^{2n}}\int_{B(x,r)}(\vp+\rho)\,\omega^{n}+C\big(\|\rho+\vp\|_{L^{\infty}}+1\big)r,
\]
where $\alpha_{2n}$ denotes the volume of unit ball in $\mathbb{R}^{2n}$. By the mean value theorem and \cite[Lemma 2.17]{CX24},
\begin{equation}\label{mean value inequality vp}
\begin{split}
\vp(x) \leq {} & \frac{1}{\alpha_{2n}r^{2n}}\int_{B(x,r)}\vp\,\omega^{n}+\big(C\|\rho\|_{C^{1}}+CK_{0}\big)r \\
\leq {} & \frac{1}{\alpha_{2n}r^{2n}}\int_{B(x,r)}\vp\,\omega^{n}+CK_{0}r.
\end{split}
\end{equation}
Let $\xi_{x}$ be the maximizing vector in \eqref{def of vp ve} and set $y_{x}:=\exp_{x}\xi_{x}$. Then $\vp_{\ve}(x)\leq\vp(y_{x})$. Applying \eqref{mean value inequality vp} to $r=\ve^{1/4}$ and $x=y_{x}$, we obtain
\begin{equation}\label{mean value inequality vp ve}
\vp_{\ve}(x) \leq \vp(y_{x}) \leq \frac{1}{\alpha_{2n}\ve^{n/2}}\int_{B(y_{x},\ve^{1/4})}\vp\,\omega^{n}+CK_{0}\ve^{1/4}.
\end{equation}
By Lemma \ref{elementary properties} (ii), $d(x,y_{x})=|\xi_{x}|_{x}\leq\ve^{1/2}$. Since $\ve^{1/4}>\ve^{1/2}$, then $B(x,\ve^{1/4}-\ve^{1/2})\subset B(y_{x},\ve^{1/4})$. Together with $\vp\leq0$ and \eqref{mean value inequality vp ve},
\[
\vp_{\ve}(x) \leq \frac{1}{\alpha_{2n}\ve^{n/2}}\int_{B(x,\ve^{1/4}-\ve^{1/2})}\vp\,\omega^{n}+CK_{0}\ve^{1/4}.
\]
Integrating both sides on $X$ and using Fubini's theorem,
\begin{equation}\label{int vp ve}
\begin{split}
\int_{X}\vp_{\ve}(x)\,\omega^{n}(x)
\leq {} & \frac{1}{\alpha_{2n}\ve^{n/2}}\int_{X}\left(\int_{B(x,\ve^{1/4}-\ve^{1/2})}\vp(y)\,\omega^{n}(y)\right)\omega^{n}(x)+CK_{0}\ve^{1/4} \\
= {} & \frac{1}{\alpha_{2n}\ve^{n/2}}\int_{X}\left(\int_{B(y,\ve^{1/4}-\ve^{1/2})}\vp(y)\,\omega^{n}(x)\right)\omega^{n}(y)+CK_{0}\ve^{1/4} \\[1mm]
= {} & \int_{X}\vp(y)\cdot\Theta_{\ve}(y)\cdot\omega^{n}(y)+CK_{0}\ve^{1/4},
\end{split}
\end{equation}
where
\[
\Theta_{\ve}(y) := \frac{\Vol\big(B(y,\ve^{1/4}-\ve^{1/2})\big)}{\alpha_{2n}\ve^{n/2}}.
\]
By \cite[Lemma 2.17]{CX24}, one has $|\Theta_{\ve}-1|\leq C\ve^{1/4}$. Together with Lemma \ref{elementary properties} (i) and \eqref{int vp ve}, we compute
\[
\begin{split}
\|\vp_{\ve}-\vp\|_{L^{1}} = {} & \int_{X}\left(\vp_{\ve}-\vp\right)\omega^{n} \leq \int_{X}\vp\left(\Theta_{\ve}-1\right)\omega^{n}+CK_{0}\ve^{1/4} \leq CK_{0}\ve^{1/4},
\end{split}
\]
as required.
\end{proof}

\subsection{Proof of Theorem \ref{modulus of continuity estimate}}

Now we are in a position to prove Theorem \ref{modulus of continuity estimate}.

\begin{proof}[Proof of Theorem \ref{modulus of continuity estimate}]
Set $\ve:=d(x,y)$. We claim that
\begin{equation}\label{modulus of continuity estimate claim}
\begin{split}
\sup_{X}(\vp_{\ve}-\vp) \leq CK_{0}^{2}(-\log\ve)^{-1}.
\end{split}
\end{equation}
Combining this claim with Lemma \ref{elementary properties} (iii),
\[
\begin{split}
\vp(y)-\vp(x) \leq {} & \vp_{\ve}(x)+2K_{0}(-\log\ve)^{-1}-\vp(x) \\[1.6mm]
\leq {} & \sup_{X}(\vp_{\ve}-\vp)+2K_{0}(-\log\ve)^{-1} \\
\leq {} & CK_{0}^{2}(-\log\ve)^{-1}.
\end{split}
\]
Switching $x$ and $y$, we are done.

\medskip

Next we prove the claim \eqref{modulus of continuity estimate claim}. Using Proposition \ref{k-subharmonicity and L1 approximation} (i) and applying Theorem \ref{stability estimate} to $v=\vp_{\ve}$ with $\delta=2K_{0}\theta_{\ve}$, we see that
\begin{equation}\label{modulus of continuity estimate eqn 1}
\sup_{X}(\vp_{\ve}-\vp) \leq s_{0}+C\|f\|_{L^{p}}^{1/k} \cdot
\left( \frac{\|(\vp_{\ve}-\vp)_{+}\|_{L^{1}}}{s_{0}} \right)^{\frac{1}{2k\gamma}} \ \ \text{for $s_{0}\geq s_{*}$},
\end{equation}
where
\[
s_{*} \leq  \max\left\{ 4K_{0}\theta_{\ve}\|\vp_{\ve}\|_{L^{\infty}}, \ C(2K_{0}\theta_{\ve})^{-2k\gamma}\cdot\|f\|_{L^{p}}^{2\gamma}\cdot\|(\vp_{\ve}-\vp)_{+}\|_{L^{1}} \right\}.
\]
By Lemma \ref{elementary properties} (i), Proposition \ref{k-subharmonicity and L1 approximation} (ii) and $\|f\|_{L^{p}}^{2\gamma}\leq K_{0}$, we have
\[
\begin{split}
s_{*} \leq {} & \max\left\{ 4K_{0}\theta_{\ve}^{2}, \ C(2K_{0}\theta_{\ve})^{-2k\gamma}\cdot\|f\|_{L^{p}}^{2\gamma}\cdot(CK_{0}\ve^{1/4}) \right\} \\
\leq {} & \max\left\{ CK_{0}^{2}(-\log\ve)^{-1}, \ CK_{0}^{2-2k\gamma}(\log\ve)^{-2k\gamma}\ve^{1/4} \right\} \\
\leq {} & CK_{0}^{2}(-\log\ve)^{-1}.
\end{split}
\]
Choosing $s_{0}=CK_{0}^{2}(-\log\ve)^{-1}$ in \eqref{modulus of continuity estimate eqn 1}, and using  and Proposition \ref{k-subharmonicity and L1 approximation} (ii) and $\|f\|_{L^{p}}^{1/k}\leq K_{0}$,
\[
\begin{split}
\sup_{X}(\vp_{\ve}-\vp) \leq {} & CK_{0}^{2}(-\log\ve)^{-1}+C\|f\|_{L^{p}}^{1/k} \cdot
\left( CK_{0}^{-1}\ve^{1/4}(-\log\ve) \right)^{\frac{1}{2k\gamma}} \\[-1.6mm]
\leq {} & CK_{0}^{2}(-\log\ve)^{-1}+CK_{0} \cdot
\left( CK_{0}^{-1}\ve^{1/4}(-\log\ve) \right)^{\frac{1}{2k\gamma}} \\[2mm]
\leq {} & CK_{0}^{2}(-\log\ve)^{-1},
\end{split}
\]
as required.
\end{proof}

\section{Proof of Theorem \ref{main result}}\label{sec:proof of main result}

Now we are ready to prove Theorem \ref{main result}.

\begin{proof}[Proof of Theorem \ref{main result}]
Set $Q:=\sup_{X}|\de \vp|$ and let $x_{0}\in X$ be the point such that
\[
|\de \vp|(x_{0}) = Q.
\]
Without loss of generality, we assume that
\[
Q^{2} \geq \|f\|_{L^{\infty}}+\sup_{X}|\de f^{1/k}|^{2}+\sup_{X}|\de\dbar f^{1/k}|+1.
\]
Then Theorem \ref{HMW estimate} and Remark \ref{K 0 K 1} show
\begin{equation}\label{de dbar vp K 1 Q 2}
\sup_{X}|\de\dbar \vp| \leq 4K_{1}Q^{2}.
\end{equation}
By Sobelov embedding and $W^{2,p}$-estimate, there exist constants $r_{0}\in(0,1)$ and $C_{*}>1$ depending only on $(X,\omega)$ such that for $r<r_{0}$, one has
\[
r|\de \vp|(x_{0}) \leq C_{*}\underset{B(x_{0},r)}{\osc}\vp+C_{*} r^{2}\|\Delta \vp\|_{L^{\infty}(B(x_{0},r))}.
\]
Combining this with \eqref{de dbar vp K 1 Q 2},
\[
rQ \leq C_{*}\underset{B(x_{0},r)}{\osc}\vp + 4nC_{*}K_{1}r^{2}Q^{2}.
\]
Without loss of generality, we assume that $(8nC_{*}K_{1}Q)^{-1}<r_{0}$. Choosing $r=(8nC_{*}K_{1}Q)^{-1}$, we obtain
\[
\underset{B(x_{0},r)}{\osc}\vp \geq \frac{1}{16nC_{*}^{2}K_{1}}.
\]
By Theorem \ref{modulus of continuity estimate},
\[
\underset{B(x_{0},r)}{\osc}\vp \leq CK_{0}^{2}(-\log r)^{-1}.
\]
It then follows that
\[
r \geq \exp(-CK_{0}^{2}K_{1}).
\]
Combining this with $r=(8nC_{*}K_{1}Q)^{-1}$,
\[
Q \leq \exp(CK_{0}^{2}K_{1}).
\]
Combining this with the definitions of $K_{0}$ and $K_{1}$ in Remark \ref{K 0 K 1}, we are done.
\end{proof}

\section{The case of K\"ahler manifolds with boundary}\label{sec:boundary case}

In this section, we generalize the argument of Theorem \ref{main result} and give an alternative proof of the gradient estimate for complex Hessian equations (without using blow-up argument) on compact K\"ahler manifolds with boundary. Let $(\Omega,\omega)$ be an $n$-dimensional compact K\"ahler manifold with $\de\Omega\neq\emptyset$. For smooth positive function $f$ on $\Omega$ and $\psi\in C^{\infty}(\de\Omega)$, consider the Dirichlet problem:
\begin{equation}\label{CHE Dirichlet}
\begin{cases}
\ \sigma_{k}(\omega+\ddbar\vp) = f & \mbox{in $\Omega$}, \\[1.2mm]
\ \vp = \psi & \mbox{on $\de\Omega$}, \\[1mm]
\ \omega+\ddbar \vp \in \Gamma_{k}(\Omega,\omega).
\end{cases}
\end{equation}
Suppose that $\underline{\vp}$ be a smooth subsolution of \eqref{CHE Dirichlet}, i.e.
\[
\begin{cases}
\ \sigma_{k}(\omega+\ddbar\underline{\vp})\geq f & \mbox{in $\Omega$}, \\[1mm]
\ \underline{\vp} = \psi & \mbox{on $\de\Omega$}.
\end{cases}
\]
In \cite{CP22}, Collins--Picard established the Hou--Ma--Wu type estimate:
\begin{equation}\label{HMW type estimate Dirichlet}
\sup_{\ov{\Omega}}|\de\dbar\vp| \leq C\sup_{\ov{\Omega}}|\de\vp|^{2}+C,
\end{equation}
and then proved the following gradient estimate by the blow-up argument.

\begin{theorem}[Collins--Picard \cite{CP22}]\label{gradient estimate Dirichlet}
Let $(\Omega,\omega)$ be an $n$-dimensional compact K\"ahler manifold with $\de\Omega\neq\emptyset$, $\vp$ be a smooth solution of \eqref{CHE Dirichlet} and $\underline{\vp}$ be a smooth subsolution of \eqref{CHE Dirichlet}. Then the gradient estimate holds:
\[
\sup_{\ov{\Omega}}|\de\vp| \leq C(f,\underline{\vp},n,k,\Omega,\omega).
\]
\end{theorem}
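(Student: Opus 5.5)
We follow the scheme of Theorem \ref{main result} and transplant each ingredient to the Dirichlet setting. The argument has three parts: a zeroth order bound, a boundary gradient bound from barriers, and an interior argument combining a modulus of continuity estimate with the Hou--Ma--Wu type estimate.

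\textbf{Zeroth order and boundary gradient bounds.} Since $\Gamma_{k}\subset\Gamma_{1}$, the function $\vp+\rho$ is subharmonic locally. The comparison principle for \eqref{CHE Dirichlet} then gives $\underline{\vp}\leq\vp\leq h$, where $h$ is the harmonic extension of $\psi$, or more precisely the solution of $\mathrm{tr}_{\omega}(\omega+\ddbar h)=0$ with $h=\psi$ on $\de\Omega$. Both $\underline{\vp}$ and $h$ equal $\psi$ on $\de\Omega$, so this two-sided bound controls $\sup_{\ov{\Omega}}|\vp|$. It also controls the normal derivative on $\de\Omega$, and the tangential derivatives there are those of $\psi$. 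Hence $\sup_{\de\Omega}|\de\vp|\leq C(\underline{\vp},\psi,\Omega,\omega)$. Combining this with \eqref{HMW type estimate Dirichlet} gives $\sup_{\ov{\Omega}}|\de\dbar\vp|\leq CQ^{2}+C$, where $Q:=\sup_{\ov{\Omega}}|\de\vp|$.

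\textbf{Modulus of continuity.} We next prove a boundary analogue of Theorem \ref{modulus of continuity estimate}: for $x,y\in\ov{\Omega}$ with $d(x,y)<\ve_{0}$,
\[
|\vp(x)-\vp(y)|\leq C\big(-\log d(x,y)\big)^{-1}.
\]
Near $\de\Omega$ this follows from the barriers. Indeed, $|\vp(x)-\vp(x')|\leq Cd(x,x')$ for $x'\in\de\Omega$, since $\underline{\vp}$ and $h$ are Lipschitz and pinch $\vp$ at the boundary. In the interior we define $\vp_{\ve}$ by \eqref{def of vp ve}, but only on $\Omega_{\ve}:=\{x: d(x,\de\Omega)>\sqrt{\ve}\}$, which ensures that the maximizing points stay inside $\Omega$ by Lemma \ref{elementary properties} (ii). We then set
\[
v:=\max\big\{\vp_{\ve},\ \vp+A(-\log\ve)^{-1}\big\}
\]
near $\partial\Omega_{\ve}$. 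If $A$ is chosen so that the second term dominates in a collar, which is possible by the boundary Lipschitz control just described, then $v$ is a viscosity $k$-subharmonic function on all of $\Omega$ and satisfies $v-\vp\leq A(-\log\ve)^{-1}$ near $\de\Omega$. The computations of Proposition \ref{k-subharmonicity and L1 approximation} are local and carry over unchanged, giving both the $(1+C\theta_{\ve})\omega$ viscosity property and the $L^{1}$ bound $CK_{0}\ve^{1/4}$.

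We then need a boundary version of the stability estimate Theorem \ref{stability estimate}: a bound on $\sup_{\Omega}(v-\vp)$ for $v$ with $v-\vp\leq s_{1}$ on $\de\Omega$. This would be proved by the Guo--Phong/Cheng--Xu auxiliary Monge--Amp\`ere (or Hessian) equation argument on $\Omega$ with Dirichlet data, applied to $(v-\vp-s)_{+}$ for $s\geq s_{1}$. Since this function vanishes near $\de\Omega$, no boundary terms arise. We expect this step to be the main obstacle, since the proof of Theorem \ref{stability estimate} must be reworked with cut-off level sets compactly supported in $\Omega$. Granting it, the same choice of $s_{0}$ as in the proof of Theorem \ref{modulus of continuity estimate} gives $\sup(\vp_{\ve}-\vp)\leq C(-\log\ve)^{-1}$ on $\Omega_{\ve}$. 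Together with Lemma \ref{elementary properties} (iii), this yields the modulus of continuity estimate.

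\textbf{Conclusion.} Let $x_{0}$ be a point where $|\de\vp|(x_{0})=Q$, and assume $Q$ is large. By the boundary gradient bound, $x_{0}$ is interior. With $r=(CQ)^{-1}$ we have $B(x_{0},r)\subset\Omega$, because the boundary bound on $|\de\vp|$ combined with $|\de\dbar\vp|\leq CQ^{2}$ forces $d(x_{0},\de\Omega)\gtrsim Q^{-1}$. Applying the interior $W^{2,p}$/Sobolev inequality from the proof of Theorem \ref{main result} on this ball gives $\osc_{B(x_{0},r)}\vp\geq c$. Comparing with $C(-\log r)^{-1}$ from the modulus of continuity estimate bounds $\log Q$, and hence $Q$.
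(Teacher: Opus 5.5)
\textbf{Verdict: genuine gap.} The missing piece is the boundary stability estimate, which you flag and then grant.

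Your overall structure matches the paper's:
\begin{itemize}
\item the barriers $\underline{\vp}\leq\vp\leq b$ (your $h$ is the paper's $b$);
\item the Collins--Picard estimate \eqref{HMW type estimate Dirichlet};
\item a logarithmic sup-convolution on an inner region, glued to $\vp$ in a collar using the Lipschitz pinching by the barriers. Your $\max\{\vp_{\ve},\vp+A(-\log\ve)^{-1}\}$ is, up to the additive constant $A(-\log\ve)^{-1}$, the paper's $\ti{\vp}_{\ve}=\max\{\vp,\vp_{\ve}-C_{\ve}\}$;
\item a stability estimate, followed by the argument of Section \ref{sec:proof of main result}.
\end{itemize}

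\textbf{The gap.} The step you grant is a stability estimate for viscosity $k$-subharmonic $v$ with $v\leq\vp$ on $\de\Omega$. It is the one new analytic input of the boundary case (Theorem \ref{stability estimate boundary} in the paper). Without it nothing bounds $\sup(\vp_{\ve}-\vp)$, so neither the modulus of continuity estimate nor the gradient bound is established.

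Your suggested route also hides the real difficulty. That $(v-\vp-s)_{+}$ vanishes near $\de\Omega$ only handles the comparison step. The Guo--Phong/Cheng--Xu argument also needs an auxiliary Monge--Amp\`ere equation with a large, cut-off right-hand side, solved with a uniform $L^{\infty}$ (or $\alpha$-invariant type) bound. On a compact K\"ahler manifold with boundary, with no pseudoconvexity or psh subsolution assumed, a global Dirichlet problem $(\omega+\ddbar u)^{n}=G\omega^{n}$, $u|_{\de\Omega}=0$, is not available in general.

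\textbf{How the paper closes it: localization.} Set $w=v-\vp$, $W=\sup w>4V\delta$ and $\tau=W/(4V)$.
\begin{itemize}
\item $\ti{w}=(1-\tau)v-\vp$ has an interior maximum $x_{0}$, since $\ti{w}\leq W/4$ on $\de\Omega$ while $\sup\ti{w}\geq W$.
\item Take a coordinate ball $B_{R}$ around $x_{0}$, possibly crossing $\de\Omega$, and $\Phi=\ti{w}-\sup\ti{w}+c\tau-a\tau|z|^{2}$.
\item $\Phi<0$ on $\de B_{R}\cap\Omega$ by the quadratic term, and on $B_{R}\cap\de\Omega$ because $v\leq\vp$ there. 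Hence $\{\Phi\geq0\}\Subset(B_{R}\cap\Omega)\cap\{w>W/2\}$.
\item The auxiliary problem $(\ddbar u)^{n}=(\eta f^{n/k}\omega^{n}/\omega_{\mathrm{E}}^{n}+\ve)\omega_{\mathrm{E}}^{n}$, $u|_{\de B_{R}}=0$, lives on a Euclidean ball, so Caffarelli--Kohn--Nirenberg--Spruck solves it.
\item The viscosity test function and Maclaurin's inequality give $\Phi+2u\leq0$. The factor $1-\tau$ with $\tau>\delta$ absorbs the $\frac{\delta}{2}\omega$ defect.
\item Ko\l odziej's $L^{\infty}$ bound, H\"older, and Chebyshev on $\{w>W/2\}$ yield $W\leq 4\delta V+CV^{1-\beta}\|f\|_{L^{p}}^{(1-\beta)/k}\|w_{+}\|_{L^{1}}^{\beta}$.
\end{itemize}
Applied to $v=\ti{\vp}_{\ve}$, this gives the modulus estimate.

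\textbf{Secondary error in your conclusion.} The bound $|\de\dbar\vp|\leq CQ^{2}$ does not control the real Hessian; for example $\mathrm{Re}(z_{1}^{2})$ is pluriharmonic. So it does not force $d(x_{0},\de\Omega)\gtrsim Q^{-1}$. Instead, when $d(x_{0},\de\Omega)<r$, use a rescaled boundary $W^{2,p}$ estimate for $\vp-b$ on a half-ball of radius comparable to $r$. The inputs are $|\vp-b|\leq Cd(\cdot,\de\Omega)$ and $|\Delta\vp|\leq CQ^{2}$. This gives $Q\leq C+CrQ^{2}$, which is absurd for $r=(C'Q)^{-1}$ with $C'$ large.

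\textbf{Minor point.} With collar width $\sqrt{\ve}$, the $L^{1}$ argument of Proposition \ref{k-subharmonicity and L1 approximation} does not carry over unchanged: the balls $B(y_{x},\ve^{1/4})$ can leave $\Omega$. This is why the paper works on $\Omega_{2\ve^{1/8}}$.
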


In the following, we will prove Theorem \ref{gradient estimate Dirichlet} without using blow-up argument, and so the dependence of $C$ on $f$ can be computed concretely. However, it seems to be quite tedious to trace this dependence. Then here we will not pursue this, and only give a sketch of the proof (as the argument is similar to that of the non-boundary case).

To prove Theorem \ref{gradient estimate Dirichlet}, using \eqref{HMW type estimate Dirichlet} and similar argument of Theorem \ref{main result}, it suffices to establish the modulus of continuity estimate. Such estimate follows from the global logarithmic sup-convolution approximation (Proposition \ref{global log sup convolution}), stability estimate (Theorem \ref{stability estimate boundary}) and the similar argument of Theorem \ref{modulus of continuity estimate}.

\medskip

We assume without loss of generality that $\psi\leq0$ (replacing $\psi$ and $\vp$ by $\psi-\sup_{\de\Omega}\psi$ and $\vp-\sup_{\de\Omega}\psi$).
Let $b$ be the smooth function satisfying
\[
\begin{cases}
\ \Delta_{\omega}b = -n & \mbox{in $\Omega$}, \\[1mm]
\ b = \psi & \mbox{on $\de\Omega$}.
\end{cases}
\]
By the maximum principle, we see that
\begin{equation}\label{underline vp vp b}
\begin{cases}
\ \underline{\vp} \leq \vp \leq b \leq 0  & \mbox{in $\Omega$}, \\[1mm]
\ \underline{\vp} = \vp = b = \psi \leq 0 & \mbox{on $\de\Omega$}.
\end{cases}
\end{equation}
Set $K_{0}:=\sup_{\ov\Omega}|\underline{\vp}|+1$ and then $\sup_{\ov\Omega}|\vp|\leq K_{0}$.

\subsection{Global logarithmic sup-convolution approximation}

For $t>0$, denote
\[
\Omega_{\ve} := \{ x\in\Omega: d(x,\de\Omega)>t\}
\]
For $\ve\in(0,1)$ and $x\in\Omega_{2\ve^{1/8}}$, define
\begin{equation}\label{def of vp ve boundary}
\vp_{\ve}(x) := \sup_{\xi\in T_{x}X,\,\exp_{x}\xi\in\ov{\Omega}}\Big\{\vp(\exp_{x}\xi)-\Psi_{\ve}(|\xi|_{x}^{2})\Big\}.
\end{equation}
By the same arguments of Lemma \ref{elementary properties} and Proposition \ref{k-subharmonicity and L1 approximation}, we have the following.

\begin{lemma}\label{elementary properties boundary}
The following holds:
\begin{enumerate}\setlength{\itemsep}{1mm}
\item[(i)] $-K_{0}\leq\vp\leq \vp_{\ve}\leq0$ on $\Omega_{2\ve^{1/8}}$;
\item[(ii)] for any $x\in\Omega_{2\ve^{1/8}}$, each maximizing vector $\xi_{x}$ in \eqref{def of vp ve boundary} satisfies $|\xi_{x}|_{x}^{2}\leq \ve$;
\item[(iii)] if $x\in\Omega_{2\ve^{1/8}}$ and $d(x,y)\leq\ve$, then $\vp(y)\leq \vp_{\ve}(x)+2K_{0}(-\log\ve)^{-1}$.
\end{enumerate}
\end{lemma}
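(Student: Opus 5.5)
The plan is to repeat the proof of Lemma \ref{elementary properties} almost verbatim. The only new issues are that the supremum in \eqref{def of vp ve boundary} runs over the constrained set $\{\xi:\exp_{x}\xi\in\ov{\Omega}\}$, and that the bounds on $\vp$ must now come from \eqref{underline vp vp b} rather than from $\sup_X\vp=0$. Throughout I regard $\Omega$ as sitting inside a slightly larger K\"ahler manifold $X$ (as the notation $T_{x}X$ suggests), so that $\exp_{x}$ is defined. I also assume $\ve$ is small, so that $\ve^{1/2}<2\ve^{1/8}$ is smaller than the injectivity radius on a neighbourhood of $\ov\Omega$.

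First I record the two-sided bound
\[
-K_{0}\leq\underline{\vp}\leq\vp\leq b\leq0 \quad\text{on }\ov\Omega,
\]
which follows from \eqref{underline vp vp b} and $K_{0}=\sup|\underline\vp|+1$. For $x\in\Omega_{2\ve^{1/8}}$, the vector $\xi=0$ is admissible because $\exp_{x}0=x\in\Omega$. This gives $\vp(x)\leq\vp_{\ve}(x)$. On the other hand, every admissible $\xi$ has $\exp_{x}\xi\in\ov\Omega$, so $\vp(\exp_{x}\xi)\leq0$; together with $\Psi_{\ve}\geq0$ this gives $\vp_{\ve}\leq0$. This proves (i).

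For (ii), I first check that maximizers exist. For $\xi$ with $\Psi_{\ve}(|\xi|^{2})>K_{0}$, the quantity being maximized is $<-K_{0}\leq\vp(x)$, so such $\xi$ are never competitive. The remaining admissible set is the closed set $\{\xi:\exp_{x}\xi\in\ov\Omega\}$ intersected with the compact ball $\{\Psi_{\ve}(|\xi|^{2})\leq K_{0}\}$, hence compact. The integrand is continuous there, so the supremum is attained. For a maximizer $\xi_{x}$, comparison with $\xi=0$ gives
\[
\Psi_{\ve}(|\xi_{x}|^{2})\leq\vp(\exp_{x}\xi_{x})-\vp(x)\leq 0+K_{0}.
\]
Hence $\log(1+|\xi_{x}|^{2}/\ve^{2})\leq\log(1+\ve^{-1})$, that is, $|\xi_{x}|^{2}\leq\ve$.

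For (iii), take $x\in\Omega_{2\ve^{1/8}}$ and $d(x,y)\leq\ve$, and write $y=\exp_{x}\xi$ with $|\xi|_{x}=d(x,y)\leq\ve$ via a minimizing geodesic. Since $\ve<2\ve^{1/8}<d(x,\de\Omega)$, the whole geodesic stays in $\Omega$, so $y\in\Omega$ and $\xi$ is admissible. Then, exactly as before,
\[
\vp_{\ve}(x)\geq\vp(y)-\Psi_{\ve}(\ve^{2})=\vp(y)-\frac{K_{0}\log2}{\log(1+\ve^{-1})}\geq\vp(y)-2K_{0}(-\log\ve)^{-1}.
\]

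The only genuinely new point, and the one needing care, is the admissibility and compactness bookkeeping near $\de\Omega$: attainment of the supremum in (ii), and the fact that the comparison vectors $0$ and $\xi$ used in (i) and (iii) are admissible. Both follow from the margin $2\ve^{1/8}\gg\ve^{1/2}\geq\ve$. Once these are in place, all the inequalities are identical to those in the proof of Lemma \ref{elementary properties}.
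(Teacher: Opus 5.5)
Your argument is the one the paper intends; its only proof is the remark that the proof of Lemma \ref{elementary properties} carries over. Your boundary bookkeeping is correct. Every competitor that matters has $|\xi|_{x}\leq\ve^{1/2}<2\ve^{1/8}<d(x,\de\Omega)$, so on the relevant ball the constraint $\exp_{x}\xi\in\ov\Omega$ holds automatically. Hence the vectors $0$ and $\xi$ you compare with are admissible, and the supremum is attained. Part (iii) needs nothing beyond this.

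There is, however, one ingredient you import that is false. It comes from the paper, not from you: the inequality $b\leq0$ in \eqref{underline vp vp b}, which you use to get $\vp\leq0$ on $\ov\Omega$.

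Since $\Delta_{\omega}b=-n<0$, the function $b$ is superharmonic. The maximum principle therefore gives only $b\geq\min_{\de\Omega}\psi$, not $b\leq\max_{\de\Omega}\psi$. Superharmonicity is exactly what makes $b$ an upper barrier, $\vp\leq b$.

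In fact $\vp\leq0$ can fail. Take the unit ball in $\mathbb{C}^{n}$ with $\omega=\ddbar|z|^{2}$, $\psi=0$, $f=\binom{n}{k}c^{k}$ for a constant $c\in(0,1)$, and subsolution $\underline\vp=0$. The solution is $\vp=(1-c)(1-|z|^{2})$, and $b=1-|z|^{2}$. So $\vp_{\ve}(0)\geq\vp(0)>0$, and the bound $\vp_{\ve}\leq0$ in (i) is false as stated. Your proof of (ii) and your existence argument for maximizers also use $\vp(\exp_{x}\xi)\leq0$, so they inherit the problem.

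The fix is a different normalization. Subtract $\sup_{\ov\Omega}b$, which depends only on $\psi$ and $(\Omega,\omega)$, from $\psi,\vp,\underline\vp,b$, instead of subtracting $\sup_{\de\Omega}\psi$. Then $\underline\vp\leq\vp\leq b\leq0$ genuinely holds. Define $K_{0}$ from the shifted $\underline\vp$. With that change your argument goes through verbatim.
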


\begin{proposition}\label{k-subharmonicity and L1 approximation boundary}
There exist constants $\ve_{0}\in(0,1)$ and $C$ depending only on $k$, $n$ and $(\Omega,\omega)$ such that the following holds. For $\ve\in(0,\ve_{0})$, one has
\begin{enumerate}\setlength{\itemsep}{1mm}
\item[(i)] $(1+K_{0}\theta_{\ve})\omega+\ddbar\vp_{\ve}\in\Gamma_{k}(\Omega_{2\ve^{1/8}},\omega)$ in the viscosity sense, where $\theta_{\ve}=C(-\log\ve)^{-1}$;
\item[(ii)] $\|\vp_{\ve}-\vp\|_{L^{1}(\Omega_{2\ve^{1/8}})}\leq CK_{0}\ve^{1/4}$.
\end{enumerate}
\end{proposition}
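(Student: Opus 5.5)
We follow the proof of Proposition \ref{k-subharmonicity and L1 approximation} essentially verbatim. The only new point is to check that, for $x\in\Omega_{2\ve^{1/8}}$, every object used there stays inside $\Omega$, well away from $\de\Omega$. Lemma \ref{elementary properties boundary} (ii) gives $d(x,y_x)\leq\ve^{1/2}$ for every maximizer, and $\ve^{1/2}\ll 2\ve^{1/8}$ for $\ve<\ve_0$ small. Hence each maximizing point $y_x=\exp_x\xi_x$ lies in $\Omega_{\ve^{1/8}}$, and the constraint $\exp_x\xi\in\ov\Omega$ in \eqref{def of vp ve boundary} is not active near a maximizer.

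For (i), fix $x_0\in\Omega_{2\ve^{1/8}}$, an upper test function $P$ at $x_0$, a maximizer $\xi_0$, and set $y_0=\exp_{x_0}\xi_0$. We build the local holomorphic map $\Phi(z)=y_0+N\cdot z$ and the vector field $\xi(z)$ with $\exp_z\xi(z)=\Phi(z)$, exactly as before (\cite[Lemma 2.9]{CX24}). For $z$ near $x_0$ we have $\Phi(z)$ near $y_0\in\Omega$, so $\Phi(z)\in\ov\Omega$ and $\xi(z)$ is an admissible competitor in \eqref{def of vp ve boundary}. This gives
\[
P\geq\vp_\ve\geq\vp\circ\Phi-\Psi_\ve(s)
\]
near $x_0$, with equality at $x_0$. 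The rest is unchanged. The bound $\ddbar\Psi_\ve(s)(x_0)\leq CK_0(-\log\ve)^{-1}\omega(x_0)$ uses only the concavity of $\Psi_\ve$ and the local estimate $|\nabla\xi|+|\nabla^2\xi|\leq C|\xi_0|^2$ from \cite[Lemma 2.11]{CX24}. That estimate is local, valid for $|\xi_0|\leq\ve^{1/2}$ below the injectivity radius of the ambient manifold (we may view $\Omega$ inside a slightly larger Kähler manifold). Then $\lambda(\omega+\ddbar\vp)(y_0)\in\Gamma_k$ gives (i). Since $\de\Omega$ is smooth and compact, all constants depend only on $(\Omega,\omega)$.

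For (ii), we again use the local sub-mean-value inequality \eqref{mean value inequality vp} for $\vp+\rho$ with $r=\ve^{1/4}$ centered at $y_x$. This needs $B(y_x,\ve^{1/4})\subset\Omega$, which holds because $d(y_x,\de\Omega)\geq 2\ve^{1/8}-\ve^{1/2}>\ve^{1/4}$ for small $\ve$. This is precisely why the width $2\ve^{1/8}$ was chosen. As before, $B(x,\ve^{1/4}-\ve^{1/2})\subset B(y_x,\ve^{1/4})$, and since $\vp\leq0$ we may shrink the domain of integration. Integrating over $x\in\Omega_{2\ve^{1/8}}$ and applying Fubini, we get
\[
\int_{\Omega_{2\ve^{1/8}}}\vp_\ve\leq\int_{\Omega}\vp\,\Theta'_\ve+CK_0\ve^{1/4},
\]
where $\Theta'_\ve(y)=\Vol\big(B(y,\ve^{1/4}-\ve^{1/2})\cap\Omega_{2\ve^{1/8}}\big)/(\alpha_{2n}\ve^{n/2})$. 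Now $0\leq\Theta'_\ve\leq 1+C\ve^{1/4}$, and $\Theta'_\ve=\Theta_\ve$ with $|\Theta_\ve-1|\leq C\ve^{1/4}$ on $\Omega_{2\ve^{1/8}+\ve^{1/4}}$. Because $\vp\leq0$, we have $\int_\Omega\vp\,\Theta'_\ve\leq\int_{\Omega_{2\ve^{1/8}}}\vp\,\Theta'_\ve$.

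The main obstacle is the boundary layer
\[
L=\Omega_{2\ve^{1/8}}\setminus\Omega_{2\ve^{1/8}+\ve^{1/4}}
\]
where $\Theta'_\ve<1$. On $L$ we have $-\vp(1-\Theta'_\ve)\leq K_0$, and $\Vol(L)\leq C\ve^{1/4}$ because $\de\Omega$ is smooth. This contributes only $CK_0\ve^{1/4}$. Hence
\[
\int_{\Omega_{2\ve^{1/8}}}(\vp_\ve-\vp)\leq\int_{\Omega_{2\ve^{1/8}}}\vp(\Theta'_\ve-1)+CK_0\ve^{1/4}\leq CK_0\ve^{1/4},
\]
and since $\vp_\ve\geq\vp$ by Lemma \ref{elementary properties boundary} (i), this is the claimed $L^1$ bound.
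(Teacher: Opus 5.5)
Your proposal is correct and follows the same route as the paper, which simply asserts that the arguments of Lemma \ref{elementary properties} and Proposition \ref{k-subharmonicity and L1 approximation} carry over to $\Omega_{2\ve^{1/8}}$. You additionally supply the boundary bookkeeping that the paper leaves implicit. Maximizers and the mean-value balls $B(y_x,\ve^{1/4})$ stay inside $\Omega$ because $\ve^{1/4}+\ve^{1/2}<2\ve^{1/8}$. In the Fubini step, the shell $\Omega_{2\ve^{1/8}}\setminus\Omega_{2\ve^{1/8}+\ve^{1/4}}$ has volume $O(\ve^{1/4})$ and therefore contributes only $CK_{0}\ve^{1/4}$.
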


For any $x\in\Omega_{2\ve^{1/8}}\setminus\ov{\Omega}_{3\ve^{1/8}}$, we have $d(x,\de\Omega)\in(2\ve^{1/8},3\ve^{1/8})$, there exists a point $p_{x}\in\de\Omega$ such that $d(x,p_{x})<3\ve^{1/8}$. Let $\xi_{x}$ be the maximizing vector in \eqref{def of vp ve boundary}. By Lemma \ref{elementary properties boundary} (ii), we see that $d(\exp_{x}\xi_{x},p_{x})<3\ve^{1/8}+\ve^{1/2}<4\ve^{1/8}$ and
\[
0 \leq \vp_{\ve}(x)-\vp(x) \leq \vp(\exp_{x}\xi_{x})-\vp(x)
\leq \vp(\exp_{x}\xi_{x})-\psi(p_{x})+\psi(p_{x})-\vp(x).
\]
Combining this with \eqref{underline vp vp b},
\[
\begin{split}
& \vp(\exp_{x}\xi_{x})-\psi(p_{x})+\psi(p_{x})-\vp(x) \\[1mm]
\leq {} & b(\exp_{x}\xi_{x})-\psi(p_{x})+\psi(p_{x})-\underline{\vp}(x) \\[1mm]
= {} & b(\exp_{x}\xi_{x})-b(p_{x})+\underline{\vp}(p_{x})-\underline{\vp}(x) \\
< {} & (\sup_{\ov\Omega}|\de b|+\sup_{\ov\Omega}|\de \underline{\vp}|)\cdot 4\ve^{1/8}.
\end{split}
\]
Set $C_{\ve}:=(\sup_{\ov\Omega}|\de b|+\sup_{\ov\Omega}|\de \underline{\vp}|)\cdot 4\ve^{1/8}$. Then the above shows
\begin{equation}\label{vp ve C ve vp}
\vp_{\ve}(x)-C_{\ve} < \vp(x) \ \ \text{for $x\in\Omega_{2\ve^{1/8}}\setminus\ov{\Omega}_{3\ve^{1/8}}$}.
\end{equation}
Define the global logarithmic sup-convolution approximation by
\[
\ti{\vp}_\e(x) :=
\begin{cases}
\ \max\{\vp(x),\vp_{\ve}(x)-C_{\ve}\} & \mbox{if $x\in\Omega_{2\ve^{1/8}}$}, \\[1mm]
\ \vp(x) & \mbox{if $x\in\ov{\Omega}\setminus\Omega_{2\ve^{1/8}}$}, \\
\end{cases}
\]

\begin{proposition}\label{global log sup convolution}
There exist constants $\ve_{0}\in(0,1)$ and $C$ depending only on $k$, $n$ and $(\Omega,\omega)$ such that the following holds. For $\ve\in(0,\ve_{0})$, one has
\begin{enumerate}\setlength{\itemsep}{1mm}
\item[(i)] $(1+K_{0}\theta_{\ve})\omega+\ddbar\ti\vp_{\ve}\in\Gamma_{k}(\Omega,\omega)$ in the viscosity sense, where $\theta_{\ve}=C(-\log\ve)^{-1}$;
\item[(ii)] $\|\ti\vp_{\ve}-\vp\|_{L^{1}(\Omega)}\leq CK_{0}\ve^{1/4}$.
\end{enumerate}
\end{proposition}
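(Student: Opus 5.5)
The proof is a gluing argument for the viscosity property, combined with a direct $L^{1}$ comparison. The viscosity statement (i) is local, so we check it region by region, using that the maximum of two viscosity subsolutions is a viscosity subsolution. We split $\Omega$ into three pieces.

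\emph{Region $\Omega\setminus\ov{\Omega}_{3\ve^{1/8}}$.} By \eqref{vp ve C ve vp}, $\vp_{\ve}-C_{\ve}<\vp$ on the annulus $\Omega_{2\ve^{1/8}}\setminus\ov{\Omega}_{3\ve^{1/8}}$. Hence $\ti\vp_{\ve}=\vp$ on $\Omega\setminus\ov{\Omega}_{3\ve^{1/8}}$. The two functions in the maximum are continuous, and the strict inequality holds on an open set, so the equality extends to a neighbourhood of $\de\Omega_{2\ve^{1/8}}$ inside $\Omega_{2\ve^{1/8}}$; by definition it also holds outside $\Omega_{2\ve^{1/8}}$. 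This gives continuity of $\ti\vp_{\ve}$ across $\de\Omega_{2\ve^{1/8}}$. On this region $\ti\vp_{\ve}=\vp$ is smooth with $\omega+\ddbar\vp\in\Gamma_{k}$. Since $\Gamma_{k}+\Gamma_{n}\subset\Gamma_{k}$, we also get $(1+K_{0}\theta_{\ve})\omega+\ddbar\vp\in\Gamma_{k}$. Any upper test function $P$ of the smooth $\vp$ at $x$ satisfies $\ddbar P(x)\geq\ddbar\vp(x)$, so the viscosity condition holds there.

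\emph{Region $\Omega_{2\ve^{1/8}}$.} Here $\ti\vp_{\ve}=\max\{\vp,\vp_{\ve}-C_{\ve}\}$. The function $\vp_{\ve}-C_{\ve}$ is a viscosity subsolution by Proposition \ref{k-subharmonicity and L1 approximation boundary} (i); subtracting a constant does not change $\ddbar$. The function $\vp$ is a subsolution as above. If $P$ touches the maximum from above at $x$, then $P$ also touches from above whichever of the two functions attains the maximum at $x$. Therefore $\lambda((1+K_{0}\theta_{\ve})\omega+\ddbar P)(x)\in\Gamma_{k}$.

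\emph{The interior subsolution property.} The one point to justify is that Proposition \ref{k-subharmonicity and L1 approximation boundary} (i) holds at every $x\in\Omega_{2\ve^{1/8}}$. The constraint $\exp_{x}\xi\in\ov\Omega$ in \eqref{def of vp ve boundary} is inactive near $x$. Indeed, a maximizing vector has $|\xi_{x}|\leq\ve^{1/2}\ll 2\ve^{1/8}$, and for $z$ near $x$ the point $\Phi(z)$ stays in $\Omega$. So the perturbation argument of Proposition \ref{k-subharmonicity and L1 approximation} goes through verbatim. Fixing $\ve_{0}$ small makes the geometry uniform, with normal coordinate balls of radius larger than $\ve^{1/8}$.

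For (ii), use $\vp\leq\ti\vp_{\ve}$ and $\ti\vp_{\ve}\leq\max\{\vp,\vp_{\ve}\}$. These give
\[
0\leq \ti\vp_{\ve}-\vp\leq (\vp_{\ve}-\vp)\,\mathbf{1}_{\Omega_{2\ve^{1/8}}}.
\]
We conclude by Proposition \ref{k-subharmonicity and L1 approximation boundary} (ii). That proposition itself comes from the mean value argument \eqref{mean value inequality vp}--\eqref{int vp ve}, using balls of radius $\ve^{1/4}<2\ve^{1/8}$ contained in $\Omega$, and extending $\vp\leq0$ by zero in the Fubini step.

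I expect the main obstacle to be the boundary-adjacent issues. One is making sure the strip where $\ti\vp_{\ve}=\vp$ is open, so that no test function sees a kink at $\de\Omega_{2\ve^{1/8}}$; this is guaranteed by the strict inequality \eqref{vp ve C ve vp} on an open annulus. The other is ensuring that the interior estimates (i) and (ii) of Proposition \ref{k-subharmonicity and L1 approximation boundary} are uniform up to distance $2\ve^{1/8}$ from $\de\Omega$.
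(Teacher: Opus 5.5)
Your proposal is correct and follows the same route as the paper. For (i), the strict inequality \eqref{vp ve C ve vp} makes $\ti\vp_{\ve}$ coincide with the smooth subsolution $\vp$ on an open collar around $\de\Omega_{2\ve^{1/8}}$, and Proposition \ref{k-subharmonicity and L1 approximation boundary} (i) handles the interior; for (ii), the pointwise bound $0\leq\ti\vp_{\ve}-\vp\leq\vp_{\ve}-\vp$ on $\Omega_{2\ve^{1/8}}$ combines with Proposition \ref{k-subharmonicity and L1 approximation boundary} (ii). You also make explicit the gluing and the maximum-of-viscosity-subsolutions step, which the paper leaves implicit.
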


\begin{proof}
For (i), by \eqref{vp ve C ve vp}, we see that $\ti{\vp}_\e\in C^{0}(\ov{\Omega})$. Then (i) follows from Proposition \eqref{k-subharmonicity and L1 approximation boundary} (i). Note that
\[
0 \leq \ti{\vp}_{\ve}-\vp = (\vp_{\ve}-C_{\ve}-\vp)_{+} \leq (\vp_{\ve}-\vp)_{+} = \vp_{\ve}-\vp \ \ \text{on $\Omega_{2\ve^{1/8}}$}.
\]
Together with Proposition \eqref{k-subharmonicity and L1 approximation boundary} (ii), we obtain
\[
\|\ti{\vp}_{\ve}-\vp\|_{L^{1}(\Omega)} = \|\ti{\vp}_{\ve}-\vp\|_{L^{1}(\Omega_{2\ve^{1/8}})}
\leq \|\vp_{\ve}-\vp\|_{L^{1}(\Omega_{2\ve^{1/8}})} \leq CK_{0}\ve^{1/4}.
\]
\end{proof}

\subsection{Stability estimate}

\begin{theorem}\label{stability estimate boundary}
Let $(\Omega,\omega)$ be an $n$-dimensional compact K\"ahler manifold with $\de\Omega\neq\emptyset$, $\vp$ be a smooth solution and $\underline{\vp}$ be a subsolution. Suppose that $v\in C(\ov{\Omega})$ satisfies
\[
v \leq 0 \ \ \text{in $\Omega$}, \ \ \ \ \ v\leq \vp \ \ \text{on $\de\Omega$},
\]
and
\[
\left(1+\frac{\delta}{2}\right)\omega+\ddbar v \in \Gamma_{k}(\Omega,\omega) \ \ \text{in the viscosity sense for some $\delta>0$}.
\]
For $p>n/k$ and $\beta\in\left(0,\frac{kp-n}{nkp+kp-n}\right)$. There exists a constant $C$ depending only on $\beta$, $p$, $k$, $n$ and $(\Omega,\omega)$ such that
\[
\sup_{X}(v-\vp) \leq 4\delta\|v\|_{L^{\infty}}+C\|v\|_{L^{\infty}}^{1-\beta}\cdot\|f\|_{L^{p}}^{\frac{1-\beta}{k}}\cdot\|(v-\vp)_{+}\|_{L^{1}}^{\beta}.
\]
\end{theorem}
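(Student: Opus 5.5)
We follow the Guo--Phong/Cheng--Xu strategy, comparing $v$ with $\vp$ through an auxiliary complex Hessian equation on $\Omega$ with Dirichlet data. The boundary condition $v\leq\vp$ on $\de\Omega$ plays the role of compactness in Theorem \ref{stability estimate}.

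\textbf{Step 1: rescale $v$.} Set $\hat v:=(1+\delta/2)^{-1}v$. Then $\omega+\ddbar\hat v\in\Gamma_{k}$ in the viscosity sense. We also have $\hat v-v\leq \tfrac{\delta}{2}\|v\|_{L^\infty}$, and $\hat v\leq \vp$ on $\de\Omega$ because $v\leq0$ gives $\hat v\geq v$ there; the correct inequality is $\hat v\leq v\cdot(1+\delta/2)^{-1}$, which is non-positive, and we still need $\hat v \leq \vp$ on $\de\Omega$. I do not yet see why this holds. To avoid the issue, I would instead use $w:=\hat v-\tfrac{\delta}{2}\|v\|_{L^\infty}$. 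Then $w\leq v\leq\vp$ on $\de\Omega$, and $\sup(v-\vp)\leq\sup(w-\vp)+\delta\|v\|_{L^\infty}$. This produces the $4\delta\|v\|_{L^\infty}$ term.

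\textbf{Step 2: auxiliary equation and the $\varepsilon$-level sets.} For $s>0$ let $\Omega_s:=\{w-\vp>s\}$, which is compactly contained in $\Omega$ because $w\leq\vp$ on $\de\Omega$. Following Guo--Phong, solve on a smooth domain approximating $\Omega_s$ (or on all of $\Omega$ with a cutoff) the auxiliary problem
\[
\sigma_k(\omega+\ddbar\psi_s)=\frac{(w-\vp-s)_+\,f}{A_s},\qquad \psi_s=0 \text{ on } \de\Omega,
\]
where $A_s:=\int_\Omega (w-\vp-s)_+f\,\omega^n$ is the normalizing energy. Next compare $-\epsilon(-\psi_s+\Lambda)^{\frac{k}{k+1}}$ with $w-\vp-s$ through a viscosity maximum principle. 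Because $w$ is only viscosity $k$-subharmonic, the comparison must be carried out at the maximum point using the concavity of $\sigma_k^{1/k}$ together with the upper test function definition, as in Cheng--Xu's Lemma 3.6. The maximum principle on $\Omega_s$ requires $w-\vp-s\leq0$ on $\de\Omega_s$, which holds, and $\psi_s\leq0$ on $\Omega$. This gives $(w-\vp-s)\lesssim A_s^{1/(k+1)}(-\psi_s)^{k/(k+1)}$.

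\textbf{Step 3: integrate and iterate.} Apply the Trudinger-type exponential integrability of $\psi_s$ from the Guo--Phong $L^\infty$ machinery, with constants depending only on $(\Omega,\omega),n,k$ (Dirichlet version). Combined with Hölder's inequality using $\|f\|_{L^p}$, this yields a De Giorgi-type inequality $r\,\phi(s+r)\leq C\|f\|_{L^p}^{1/k}\phi(s)^{1+\mu}$ for $\phi(s):=\mathrm{Vol}(\Omega_s)$. The De Giorgi lemma then bounds $\sup(w-\vp)$ by $s_0+C\|f\|^{1/k}_{L^p}\phi(s_0)^{\mu}$. Use Chebyshev in the form $\phi(s_0)\leq\|(v-\vp)_+\|_{L^1}/s_0$, then optimize $s_0$ against $\|v\|_{L^\infty}$, interpolating between the trivial bound $\sup(v-\vp)\leq 2\|v\|_{L^\infty}+\dots$ and the $L^1$ term. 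This produces the product $\|v\|_{L^\infty}^{1-\beta}\|f\|^{(1-\beta)/k}_{L^p}\|(v-\vp)_+\|_{L^1}^\beta$; the admissible range of $\beta$ comes from $\mu$, which is tied to $p$ through $q_0=kp/(kp-n)$.

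\textbf{Main obstacle.} The main obstacle is Step 2: justifying the comparison with a merely viscosity-$k$-subharmonic $w$ on a domain with boundary, and obtaining uniform exponential integrability for the Dirichlet auxiliary solutions. For the latter I would first try to apply the Guo--Phong estimate directly, using $\underline\vp$ or $b$ as a barrier near $\de\Omega$.
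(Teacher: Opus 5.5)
There is a genuine gap, and your Step 1 creates it. After dividing by $1+\frac{\delta}{2}$ and subtracting a constant, $\omega+\ddbar w$ is merely in $\Gamma_k$ and has no strict room. The comparison in Step 2 cannot close without room. Take an interior maximum of $w-\vp-s-\epsilon(-\psi_s+\Lambda)^{k/(k+1)}$ and set $\epsilon':=\frac{k\epsilon}{k+1}(-\psi_s+\Lambda)^{-1/(k+1)}$. The upper test function for $w$, together with $\ddbar\psi_s=(\omega+\ddbar\psi_s)-\omega$, only gives
\[
\lambda\big(\omega+\ddbar\vp+\epsilon'\omega-\epsilon'(\omega+\ddbar\psi_s)\big)\in\Gamma_k .
\]
Hence you get $\sigma_k^{1/k}(\omega+\ddbar\vp+\epsilon'\omega)\geq\epsilon'\sigma_k^{1/k}(\omega+\ddbar\psi_s)$. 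The left-hand side is not controlled by $f$, since it involves $\sigma_{k-1}(\omega+\ddbar\vp)$ and lower terms. In Guo--Phong the extra $\epsilon'\omega$ is absorbed by the $\omega$ in $\omega+\ddbar\vp$, because there the competitor is a constant. Here that $\omega$ has already been spent on $w$.

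The missing idea is to shrink $v$ rather than rescale it, and to do so adaptively. Put $W:=\sup(v-\vp)$ and $V:=\sup|v|+\sup\vp+1$. When $W>4V\delta$, take $\tau:=W/(4V)$; then
\[
\omega+\ddbar\big((1-\tau)v\big)=(1-\tau)\Big[\Big(1+\frac{\delta}{2}\Big)\omega+\ddbar v\Big]+\theta\omega,\qquad \theta=\tau-(1-\tau)\frac{\delta}{2}\geq\frac{\tau}{2}.
\]
This room $\theta\omega$ is what absorbs the error terms. Because it is proportional to $W/V$, it is exactly the source of the factor $\|v\|_{L^\infty}^{1-\beta}$ and of the $\delta$-independence of $C$ in the statement. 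A fixed shrinking such as $(1-\delta)v$ would instead give $\delta$-dependent constants, as in the compact-case stability estimate. Your ``optimize $s_0$ against $\|v\|_{L^\infty}$'' step does not identify any mechanism that produces this factor.

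The boundary issue you worried about is then handled by a margin rather than a sign. On $\de\Omega$ one has $(1-\tau)v-\vp\leq\tau V=W/4$, while $\sup((1-\tau)v-\vp)\geq W$, so the maximum is interior.

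Step 3 has further problems. A solution of your $\sigma_k$ auxiliary equation is only $\omega$-$k$-subharmonic. For $k<n$ such functions are not uniformly exponentially integrable; locally, $-|z|^{2-2n/k}$ is $k$-subharmonic. So the Trudinger-type bound you want to borrow is unavailable. Guo--Phong instead use a Monge--Amp\`ere auxiliary equation with right-hand side built from $f^{n/k}$, and pass back via Maclaurin, $\sigma_k^{1/k}\geq c_{n,k}\,\sigma_n^{1/n}$. The solvability of your degenerate global Dirichlet problem on $\Omega$, and the uniformity of its constants, are also left open.

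The paper needs none of this. It localizes to a coordinate ball $B_R$ around an interior maximum of $(1-\tau)v-\vp$. It subtracts $a\tau|z|^2$, which $\theta\omega$ absorbs, so that $\Phi<0$ on $\de(B_R\cap\Omega)$ and $\{\Phi\geq0\}\Subset\{v-\vp>W/2\}$. It then solves the flat problem $(\ddbar u)^n=(\eta f^{n/k}\omega^n/\omega_{\mathrm{E}}^n+\ve)\,\omega_{\mathrm{E}}^n$ on $B_R$ with $u=0$ on $\de B_R$. The viscosity test and Maclaurin give $\Phi+2u\leq0$. It concludes from $c\tau\leq 2\|u\|_{L^\infty}$, Ko\l odziej's $L^\infty$ bound with exponent $r<kp/n$, H\"older, and Chebyshev on $\{v-\vp>W/2\}$. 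This is a one-shot argument with no De Giorgi iteration.
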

\begin{proof}
    The argument adapts the local auxiliary Monge--Amp\`ere construction of Guo--Phong \cite[Theorem 2]{GP24} and the viscosity comparison of Cheng--Xu \cite[Proposition 3.1]{CX24}. Set
\[
V := \sup_{\ov\Omega}|v|+\sup_{\bar\Omega}\vp+1, \ \ \ w := v-\vp, \ \ \ W = \sup_{\ov\Omega}w.
\]
If $W\leq 4V\delta$, then we are done. Next assume that $W>4V\delta$ and define
\[
\tau := \frac{W}{4V}, \ \ \
\ti{w} := (1-\tau)v-\vp, \ \ \
\ti{W} := \sup_{\ov\Omega}\ti{w}.
\]
Since $0\leq-v\leq V$ in $\Omega$ and $w\leq0$ on $\de\Omega$, then
\begin{equation}\label{ti w de Omega}
\ti{w} = w-\tau v \geq w \ \ \text{in $\Omega$}, \ \ \ \ \
\ti{w} = w-\tau v \leq \tau V = \frac{W}{4} \ \ \text{on $\de\Omega$}.
\end{equation}
It follows that $\ti{w}$ attains its maximum at an interior point $x_{0}\in\Omega$. Choose a coordinate ball $B_{R}$ centered at $x_{0}$. Note that $B_{R}$ may intersect $\de\Omega$. Denote
\[
D := B_{R}\cap\Omega, \ \ \
\omega_{\mathrm{E}} := \ddbar|z|^{2}.
\]
and assume (shrinking $B_{R}$ if necessary)
\[
\frac{1}{2}\omega_{\mathrm{E}} \leq \omega \leq 2\omega_{\mathrm{E}} \ \ \text{in $D$}.
\]
Choose sufficiently small constant $a\in(0,1/2)$ such that
\[
a\omega_{\mathrm{E}} \leq \frac{1}{4}\omega, \ \ \
aR^{2} \leq 1, \ \ \
c := \frac{aR^{2}}{2}.
\]
Define the function $\Phi$ on $D$ by
\[
\Phi := \ti{w}-\ti{W}+c\tau-a\tau|z|^{2} \ \ \text{on $D$}
\]
and the set $S$ by
\[
S := \{x\in\Omega: w(x)>W/2\}.
\]
We split the following argument into three steps.

\bigskip
\noindent
{\bf Step 1.} $\{x\in D:\Phi(x)\geq0\}\neq\emptyset$ and $\{x\in D:\Phi(x)\geq0\}\Subset D\cap S$.
\bigskip

For convenience, write $E:=\{x\in D:\Phi(x)\geq0\}$. Since $z(x_{0})=0$, then $\Phi(x_{0})=c\tau>0$ and so $E\neq\emptyset$.
By the definitions of $\Phi$, $a$, $c$ and $\tau$, and \eqref{ti w de Omega}, we have
\[
\Phi \leq c\tau-a\tau R^{2} = -\frac{1}{2}a\tau R^{2} \ \ \text{on $\de B_{R}\cap\Omega$},
\]
\[
\Phi \leq \frac{W}{4}-\ti{W}+c\tau \leq -\frac{3W}{4}+c\tau \leq -\frac{W}{2} \ \ \text{on $B_{R}\cap\de \Omega$}.
\]
This shows $\Phi<0$ on $\de D$ and so $E\Subset D$.

On the other hand, for $x\in E$, we have $\Phi(x)\geq0$ and then
\[
\ti{w}(x) \geq \ti{W}-c\tau \geq W-c\tau.
\]
Together with $w=\ti{w}+\tau v$, $v\geq-V$ and $c\leq 1/2\leq V/2$,
\[
w(x) \geq W-c\tau-\tau V \geq W-\frac{3}{2}\tau V = \frac{5W}{8}.
\]
This shows $x\in S$ and so $\ov{E}=E\subset S$. Together with $E\Subset D$, we obtain $E\Subset D\cap S$.

\medskip

By Step 1, we may choose $\eta\in C_{c}^{\infty}(D\cap S)$ such that
\[
0 \leq \eta \leq 1 \ \text{in $D\cap S$}, \ \ \
\eta = 1 \ \text{near $\{\Phi\geq0\}$}.
\]
Define
\[
G :=
\begin{cases}
\ \eta \cdot f^{n/k}\cdot\frac{\omega^{n}}{\omega_{\mathrm{E}}^{n}} & \mbox{in $B_{R}\cap D$}, \\[1mm]
\ 0 & \mbox{in $B_{R}\setminus D$}.
\end{cases}
\]
Note that $G$ is smooth on $B_{R}$. By \cite[Theorem 1.1]{CKNS85}, the following Dirichlet problem admits a smooth solution:
\begin{equation}\label{CMAE B R}
\begin{cases}
\ (\ddbar u_{\ve})^{n} = (G+\ve)\omega_{\mathrm{E}}^{n} & \mbox{in $B_{R}$}, \\[1.2mm]
\ \ddbar u_{\ve} > 0 & \mbox{in $B_{R}$}, \\[1mm]
\ u_{\ve}= 0 & \mbox{on $\de B_{R}$}.
\end{cases}
\end{equation}

\bigskip
\noindent
{\bf Step 2.} $\Phi+2u_{\ve}\leq0$ on $D$.
\bigskip

Let $y_{0}$ be the maximum point of $\Phi+2u_{\ve}$ in $\ov{D}$. If $y_{0}\in\de D$ or $\Phi(y_{0})\leq0$, then we are done. Next we assume that $y_{0}$ is an interior point of $D$ and $\Phi(y_{0})>0$. It then follows from Step 1 that $\eta(y_{0})=1$. Set $M:=(\Phi+2u_{\ve})(y_{0})$ and
\[
P := \frac{\vp+\ti{W}-c\tau+a\tau|z|^{2}-2u_{\ve}+M}{1-\tau}.
\]
It is clear that $P$ is a upper test function of $v$ at $y_{0}$ and so
\begin{equation}\label{test P}
\lambda\left[\left(1+\frac{\delta}{2}\right)\omega+\ddbar P\right] \in \Gamma_{k}.
\end{equation}
Denote
\[
\mathcal{A} := (1-\tau)\left[\left(1+\frac{\delta}{2}\right)\omega+\ddbar P \right], \ \ \
\theta := \tau-\frac{(1-\tau)\delta}{2}.
\]
It follows from $\tau=W/(4V)>\delta$ that $\theta\geq\tau/2$. Direct calculation and $a\omega_{\mathrm{E}}\leq\omega/4$ show
\[
\omega+\ddbar\vp = \mathcal{A}+\theta\omega-a\tau\omega_{\mathrm{E}}+2\ddbar u_{\ve}
\geq \mathcal{A}+2\ddbar u_{\ve}.
\]
By \eqref{CHE Dirichlet} and \eqref{test P},
\[
f^{1/k} = \sigma_{k}^{1/k}(\omega+\ddbar\vp) \geq 2\sigma_{k}^{1/k}(\ddbar u_{\ve}).
\]
However, from Maclaurin's inequality, \eqref{CMAE B R} and $\eta(y_{0})=1$,
\[
2\sigma_{k}^{1/k}(\ddbar u_{\ve}) \geq 2\sigma_{n}^{1/n}(\ddbar u_{\ve}) > 2f^{1/k},
\]
which is impossible.

\bigskip
\noindent
{\bf Step 3.} Completion of the proof.
\bigskip

By Step 2, $\Phi(x_{0})=c\tau$ and $\tau=W/(4V)$,
\begin{equation}\label{W upper bound}
\frac{cW}{4V} = c\tau = \Phi(x_{0}) \leq 2\|u_{\ve}\|_{L^{\infty}(B_{R})}.
\end{equation}
For any $r\in(1,kp/n)$, by \cite[Theorem 3]{Kol96} and \cite[Remark 2]{Blocki11} (see also \cite[Theorem 1.2]{WWZ21}), one has
\[
\|u_{\ve}\|_{L^{\infty}(B_{R})} \leq C_{r}\|G+\ve\|_{L^{r}(B_{R},\omega_{\mathrm{E}})}^{1/n}.
\]
Letting $\ve\to0$ and using \eqref{W upper bound},
\[
W \leq C_{r}V\left(\int_{S}f^{nr/k}\omega^{n}\right)^{\frac{1}{nr}}.
\]
Set $\alpha_{r}:=\frac{1}{nr}-\frac{1}{kp}$. By Holder's inequality,
\[
W \leq C_{r}V\cdot\|f\|_{L^{p}}^{1/k}\cdot\Vol_{\omega}^{\alpha_{r}}(S).
\]
Recalling $S=\{\Phi\geq W/2\}$, then
\[
\Vol_{\omega}(S) \leq \frac{2}{W}\|w_{+}\|_{L^{1}}.
\]
Denote $\beta:=\frac{\alpha_{r}}{1+\alpha_{r}}$. We obtain
\[
W \leq \left(C_{r}V\cdot\|f\|_{L^{p}}^{1/k}\right)^{\frac{1}{1+\alpha_{r}}}\|w_{+}\|_{L^{1}}^{\frac{\alpha_{r}}{1+\alpha_{r}}}
= C_{r}V^{1-\beta}\cdot\|f\|_{L^{p}}^{\frac{1-\beta}{k}}\cdot\|w_{+}\|_{L^{1}}^{\beta},
\]
as required.
\end{proof}

\appendix

\section{Proof of Theorem \ref{HMW estimate}}\label{App:proof of Hou-Ma-Wu's estimate}

In this appendix, we give the proof of Theorem \ref{HMW estimate}. As mentioned before, the argument mainly follows \cite[Theorem 1.1]{HMW10} with some slight modifications (using the test function of Xu \cite[Proposition 5.2]{Xu26}).

\begin{proof}[Proof of Theorem \ref{HMW estimate}]
For notational convenience, set
\[
M_{0} := \sup_{X}|\vp|+1, \ \ \
K_{f} := \sup_{X}f^{1/k}+\sup_{X}|\de f^{1/k}|^{2}+\sup_{X}|\de\dbar f^{1/k}|+1.
\]
Without loss of generality, we assume that $C_{\BK}\geq1$. It then suffices to show
\begin{equation}\label{HMW estimate main}
\sup_{X}|\de\dbar \vp| \leq e^{A_{n}C_{\BK}M_{0}}\left(\sup_{X}|\de\vp|^{2}+\|f\|_{L^{\infty}}+K_{f}\right).
\end{equation}
In the following argument, we often use $A_{n}$ to denote a dimensional constant (depending only on $n$), which may differ from line to line.

For $x\in X$ and unit vector $v\in T_{x}X$, consider the quantity
\[
Q(x,v) := \log(1+\vp_{i\ov{j}}v^{i}\ov{v}^{j})+\rho_{1}(|\de\vp|^{2})+\rho_{0}(\vp),
\]
where
\[
\rho_{1}(t) := -\frac{1}{2}\log\left(1-\frac{t}{2K}\right), \ \
K := \sup_{X}|\de\vp|^{2}+\|f\|_{L^{\infty}}+K_{f}
\]
and

\[
\rho_{0}(t) := -\frac{1}{4\delta}\log\left(1+\frac{t}{2M_{0}}\right), \ \
M_{0} := \sup_{X}|\vp|+1, \ \ \delta := \Big(96C_{\BK}M_{0}\Big)^{-1}.
\]
Direct calculation shows
\begin{equation}\label{rho 1}
0 \leq \rho_{1} \leq \frac{1}{2}\log 2, \ \ \
\frac{1}{4K} \leq \rho_{1}' \leq \frac{1}{2K}, \ \ \
\rho_{1}'' = 2(\rho_{1}')^{2}
\end{equation}
and
\begin{equation}\label{rho 0}
-\frac{\log(3/2)}{4\delta} \leq \rho_{0} \leq 0, \ \ \
8C_{\BK} \leq -\rho_{0}' \leq 12C_{\BK}, \ \ \
\rho_{0}'' = 4\delta(\rho_{0}')^{2}.
\end{equation}
Suppose that $Q$ attains its maximum at $(x_{0},v_{0})$. Since $(X,\omega)$ is K\"ahler, there exists a holomorphic coordinate $\{z^{i}\}_{i=1}^{n}$ centered at $x_{0}$ such that
\[
g_{i\ov{j}} = \delta_{ij}, \ \ \
\frac{\de g_{i\ov{j}}}{\de z^{k}} = 0, \ \ \
\vp_{i\ov{j}} = \vp_{i\ov{i}}\delta_{ij}, \ \ \
\vp_{1\ov{1}} \geq \cdots \geq \vp_{n\ov{n}} \ \ \ \text{at $x_{0}$}.
\]
Then we have $v_{0}=\de/\de z^{1}$ at $x_{0}$. Near $x_{0}$, define a local vector field by
\[
w := g_{1\ov{1}}^{-1/2}\frac{\de}{\de z^{1}}
\]
and a local quantity by
\[
\hat{Q}(x) := Q(x,w) = \log(1+g_{1\ov{1}}^{-1}\vp_{1\ov{1}})+\rho_{1}(|\de\vp|^{2})+\rho_{0}(\vp).
\]
It is clear that $\hat{Q}$ attains its maximum at $x_{0}$. Let $F^{i\ov{j}}$ and $F^{i\ov{j},p\ov{q}}$ be the first and second differentiation of the operator $\sigma_{k}^{1/k}$. By the maximum principle, at $x_{0}$, we have
\begin{equation}\label{hat Q i}
0 = \hat{Q}_{i} = \frac{\vp_{1\ov{1}i}}{1+\vp_{i\ov{i}}}+\rho_{1}'(|\de\vp|^{2})_{i}+\rho_{0}'\vp_{i}
\end{equation}
and
\begin{equation}\label{F ii hat Q ii}
\begin{split}
0 \geq F^{i\ov{i}}\hat{Q}_{i\ov{i}}
& = \frac{F^{i\ov{i}}\vp_{1\ov{1}i\ov{i}}}{1+\vp_{1\ov{1}}}-\frac{F^{i\ov{i}}|\vp_{1\ov{1}i}|^{2}}{(1+\vp_{1\ov{1}})^{2}} \\[0.6mm]
& \ +\rho_{1}'F^{i\ov{i}}(|\de\vp|^{2})_{i\ov{i}}+\rho_{1}''F^{i\ov{i}}|(|\de\vp|^{2})_{i}|^{2} \\[1.2mm]
& \ +\rho_{0}'F^{i\ov{i}}\vp_{i\ov{i}}+\rho_{0}''F^{i\ov{i}}|\vp_{i}|^{2}. \\[0.5mm]
\end{split}
\end{equation}
Set $\lambda_{i}:=1+\vp_{i\ov{i}}$. Using $Q(x,v)\leq Q(x_{0},v_{0})$, \eqref{rho 1} and \eqref{rho 0}, we see that
\[
\log\left(1+\sup_{X}|\de\dbar\vp|\right) \leq \log\lambda_{1}+\frac{1}{2}\log2+\frac{\log3/2}{4\delta}
\]
and so
\[
\sup_{X}|\de\dbar\vp| \leq \lambda_{1} \, \exp\left(\frac{1}{2}\log2+\frac{\log3/2}{4\delta}\right) \leq \lambda_{1}e^{A_{n}C_{\BK}M_{0}}.
\]
To prove \eqref{HMW estimate main}, it suffices to show
\begin{equation}\label{HMW estimate goal}
\lambda_{1} \leq e^{A_{n}C_{\BK}M_{0}}K.
\end{equation}
Set $\mathcal{F}:=\sum_{i}F^{i\ov{i}}$. It follows from Maclaurin's inequlaity that $\mathcal{F}\geq1$ (see \cite[p.555]{HMW10}). Following the same calculation of \cite[p.552-554]{HMW10}, and using \eqref{rho 1} and \eqref{rho 0}, we have
\[
\begin{split}
F^{i\ov{i}}\vp_{1\ov{1}i\ov{i}}
\geq {} & (f^{1/k})_{1\ov{1}}-F^{i\ov{j},p\ov{q}}\vp_{i\ov{j}1}\vp_{p\ov{q}\ov{1}}+\sum_{i}F^{i\ov{i}}(\vp_{1\ov{1}}-\vp_{i\ov{i}})R_{1\ov{1}i\ov{i}} \\[-1.6mm]
\geq {} & -K_{f}-F^{i\ov{j},p\ov{q}}\vp_{i\ov{j}1}\vp_{p\ov{q}\ov{1}}-C_{\BK}(\lambda_{1}\mathcal{F}-f^{1/k}) \\[1.6mm]
\geq {} & -K_{f}-F^{i\ov{j},p\ov{q}}\vp_{i\ov{j}1}\vp_{p\ov{q}\ov{1}}-C_{\BK}\lambda_{1}\mathcal{F},
\end{split}
\]

\[
\begin{split}
\rho_{1}'F^{i\ov{i}}(|\de\vp|^{2})_{i\ov{i}}
= {} & \rho_{1}'F^{i\ov{i}}\vp_{i\ov{i}}^{2}+\rho_{1}'\sum_{i,j}F^{i\ov{i}}|\vp_{ij}|^{2}+\rho_{1}'F^{i\ov{i}}(\vp_{pi\ov{i}}\vp_{\ov{p}}+\vp_{\ov{p}i\ov{i}}\vp_{p}) \\[0.6mm]
\geq {} & \rho_{1}'F^{i\ov{i}}\lambda_{i}^{2}-2\rho_{1}'f^{1/k}+\rho_{1}'\mathcal{F}+\rho_{1}'\sum_{i,j}F^{i\ov{i}}|\vp_{ij}|^{2} \\
& -\rho_{1}'|\de f^{1/k}|^{2}-\rho_{1}'|\de\vp|^{2}-C_{\BK}\rho_{1}'|\de\vp|^{2}\mathcal{F} \\[1.6mm]
\geq {} & \frac{1}{4K}F^{i\ov{i}}\lambda_{i}^{2}-C_{\BK}\mathcal{F}-4
\end{split}
\]
and
\[
\rho_{0}'F^{i\ov{i}}\vp_{i\ov{i}} = \rho_{0}'F^{i\ov{i}}(\lambda_{i}-1)  = \rho_{0}'f^{1/k}-\rho_{0}'\mathcal{F} \geq -12C_{\BK}f^{1/k}+8C_{\BK}\mathcal{F} .
\]
Substituting the above into \eqref{F ii hat Q ii},
\[
\begin{split}
0 \geq {} & -\frac{F^{i\ov{j},p\ov{q}}\vp_{i\ov{j}1}\vp_{p\ov{q}\ov{1}}}{\lambda_{1}}-\frac{F^{i\ov{i}}|\vp_{1\ov{1}i}|^{2}}{\lambda_{1}^{2}} +\frac{1}{4K}F^{i\ov{i}}\lambda_{i}^{2}+\rho_{1}''F^{i\ov{i}}|(|\de\vp|^{2})_{i}|^{2} \\
& +\rho_{0}''F^{i\ov{i}}|\vp_{i}|^{2}+6C_{\BK}\mathcal{F}-\frac{K_{f}}{\lambda_{1}}-4-12C_{\BK}f^{1/k}.
\end{split}
\]
Without loss of generality, we may assume that $\lambda_{1}\geq K_{f}$. Using $\mathcal{F}\geq1$ and $C_{\BK}\geq1$, we obtain
\begin{equation}\label{HMW estimate eqn 1}
\begin{split}
0 \geq {} & -\frac{F^{i\ov{j},p\ov{q}}\vp_{i\ov{j}1}\vp_{p\ov{q}\ov{1}}}{\lambda_{1}}-\frac{F^{i\ov{i}}|\vp_{1\ov{1}i}|^{2}}{\lambda_{1}^{2}} +\frac{1}{4K}F^{i\ov{i}}\lambda_{i}^{2}+\rho_{1}''F^{i\ov{i}}|(|\de\vp|^{2})_{i}|^{2} \\[1mm]
& +\rho_{0}''F^{i\ov{i}}|\vp_{i}|^{2}+C_{\BK}\mathcal{F}-12C_{\BK}f^{1/k}.
\end{split}
\end{equation}
Now the task is to deal with the third order terms:
\[
\mathcal{B} := \frac{F^{i\ov{i}}|\vp_{1\ov{1}i}|^{2}}{\lambda_{1}^{2}}, \ \ \
\mathcal{G} := -\frac{F^{i\ov{j},p\ov{q}}\vp_{i\ov{j}1}\vp_{p\ov{q}\ov{1}}}{\lambda_{1}}.
\]
We split the argument into two cases.

\bigskip
\noindent
{\bf Case 1.} $\lambda_{n}<-\delta\lambda_{1}$.
\bigskip

By \eqref{hat Q i}, \eqref{rho 1} and \eqref{rho 0},
\[
\begin{split}
\mathcal{B}
\leq {} & 2(\rho_{1}')^{2}F^{i\ov{i}}|(|\de\vp|^{2})_{i}|^{2}+2(\rho_{0}')^{2}F^{i\ov{i}}|\vp_{i}|^{2}
\leq {}  \rho_{1}''F^{i\ov{i}}|(|\de\vp|^{2})_{i}|^{2}+A_{n}C_{\BK}^{2}K\mathcal{F}.
\end{split}
\]
Substituting this into \eqref{HMW estimate eqn 1}, and using $\mathcal{G}\geq0$ and $\rho_{0}''\geq0$,
\[
0 \geq \frac{1}{4K}F^{i\ov{i}}\lambda_{i}^{2}-A_{n}C_{\BK}^{2}K\mathcal{F}-12C_{\BK}f^{1/k}
\geq \frac{1}{4K}F^{i\ov{i}}\lambda_{i}^{2}-A_{n}C_{\BK}^{2}K\mathcal{F}-12C_{\BK}K.
\]
Together with $\mathcal{F}\geq1$, $\lambda_{n}<-\delta\lambda_{1}$ and $F^{n\ov{n}}\geq\mathcal{F}/n$,
\[
\begin{split}
0 \geq {} & \frac{1}{4K}F^{n\ov{n}}\lambda_{n}^{2}-A_{n}C_{\BK}^{2}K\mathcal{F}-12C_{\BK}K\mathcal{F}
\geq {}  \frac{\delta^{2}}{4nK}\lambda_{1}^{2}\mathcal{F}-(A_{n}C_{\BK}^{2}+12C_{\BK})K\mathcal{F}.
\end{split}
\]
It follows that $\lambda_{1}\leq A_{n}\delta^{-1}C_{\BK}K$. Recalling $\delta=(96C_{\BK}M_{0})^{-1}$, we obtain
\[
\lambda_{1} \leq A_{n}C_{\BK}^{2}M_{0}K,
\]
which implies \eqref{HMW estimate goal}.

\bigskip
\noindent
{\bf Case 2.} $\lambda_{n}\geq-\delta\lambda_{1}$.
\bigskip

Define the index set
\[
I := \{i:F^{i\ov{i}} > \delta^{-1}F^{1\ov{1}}\}.
\]
The term $\mathcal{B}$ can be decomposed into three parts:
\[
\mathcal{B} = \sum_{i\notin I}\frac{F^{i\ov{i}}|\vp_{1\ov{1}i}|^{2}}{\lambda_{1}^{2}}
+2\delta\sum_{i\in I}\frac{F^{i\ov{i}}|\vp_{1\ov{1}i}|^{2}}{\lambda_{1}^{2}}
+(1-2\delta)\sum_{i\in I}\frac{F^{i\ov{i}}|\vp_{1\ov{1}i}|^{2}}{\lambda_{1}^{2}}.
\]
Using \eqref{hat Q i}, \eqref{rho 1} and \eqref{rho 0},
\[
\begin{split}
\sum_{i\notin I}\frac{F^{i\ov{i}}|\vp_{1\ov{1}i}|^{2}}{\lambda_{1}^{2}}
\leq {} & 2(\rho_{1}')^{2}\sum_{i\notin I}F^{i\ov{i}}|(|\de\vp|^{2})_{i}|^{2}+2(\rho_{0}')^{2}\sum_{i\notin I}F^{i\ov{i}}|\vp_{i}|^{2} \\
\leq {} & \rho_{1}''\sum_{i\notin I}F^{i\ov{i}}|(|\de\vp|^{2})_{i}|^{2}+2(\rho_{0}')^{2}\delta^{-1}KF^{1\ov{1}} \\
\leq {} & \rho_{1}''\sum_{i\notin I}F^{i\ov{i}}|(|\de\vp|^{2})_{i}|^{2}+A_{n}C_{\BK}^{3}M_{0}KF^{1\ov{1}},
\end{split}
\]

\[
\begin{split}
2\delta\sum_{i\in I}\frac{F^{i\ov{i}}|\vp_{1\ov{1}i}|^{2}}{\lambda_{1}^{2}}
\leq {} & 4\delta(\rho_{1}')^{2}\sum_{i\in I}F^{i\ov{i}}|(|\de\vp|^{2})_{i}|^{2}+4\delta(\rho_{0}')^{2}\sum_{i\in I}F^{i\ov{i}}|\vp_{i}|^{2} \\
\leq {} & \rho_{1}''\sum_{i\in I}F^{i\ov{i}}|(|\de\vp|^{2})_{i}|^{2}+\rho_{0}''\sum_{i\in I}F^{i\ov{i}}|\vp_{i}|^{2}
\end{split}
\]
and
\[
\begin{split}
\mathcal{G}
\geq {} & -\sum_{i\in I}\frac{F^{i\ov{1},1\ov{i}}|\vp_{1\ov{1}i}|^{2}}{\lambda_{1}}
= \sum_{i\in I}\frac{(F^{i\ov{i}}-F^{1\ov{1}})|\vp_{1\ov{1}i}|^{2}}{\lambda_{1}(\lambda_{1}-\lambda_{i})} \\
\geq {} & \frac{1-\delta}{1+\delta}\sum_{i\in I}\frac{F^{i\ov{i}}|\vp_{1\ov{1}i}|^{2}}{\lambda_{1}^{2}}
\geq (1-2\delta)\sum_{i\in I}\frac{F^{i\ov{i}}|\vp_{1\ov{1}i}|^{2}}{\lambda_{1}^{2}}.
\end{split}
\]
Substituting the above into \eqref{HMW estimate eqn 1},
\[
\begin{split}
0 \geq {} & \frac{1}{4K}F^{i\ov{i}}\lambda_{i}^{2}-A_{n}C_{\BK}^{3}M_{0}KF^{1\ov{1}}+C_{\BK}\mathcal{F}-12C_{\BK}f^{1/k} \\
\geq {} & \frac{1}{4K}F^{1\ov{1}}\lambda_{1}^{2}-A_{n}C_{\BK}^{3}M_{0}KF^{1\ov{1}}+C_{\BK}\mathcal{F}-12C_{\BK}f^{1/k}.
\end{split}
\]
Without loss of generality, we assume that $\lambda_{1}\geq 10A_{n}C_{\BK}^{2}M_{0}K$, and then
\[
\frac{1}{8K}F^{1\ov{1}}\lambda_{1}^{2}+C_{\BK}\mathcal{F} \leq 12C_{\BK}f^{1/k}.
\]
By the same argument of \cite[(2.32)]{HMW10} and $\|f\|_{L^{\infty}}\leq K$, we see that
\[
\lambda_{1} \leq (A_{n}C_{\BK}Kf)^{1/2}
\leq A_{n}C_{\BK}(K+\|f\|_{L^{\infty}})
\leq A_{n}C_{\BK}K,
\]
which implies \eqref{HMW estimate goal}.
\end{proof}


\begin{thebibliography}{99}

\bibitem{Blocki09} B\l ocki, Z. {\em A gradient estimate in the Calabi-Yau theorem}, Math. Ann. {\bf 344} (2009), no. 2, 317--327.

\bibitem{Blocki11} B\l ocki, Z. {\em On the uniform estimate in the Calabi-Yau theorem, II}, Sci. China Math. {\bf 54} (2011), no. 7, 1375--1377.

\bibitem{CKNS85} Caffarelli, L.; Kohn, J. J.; Nirenberg, L.; Spruck, J. {\em The Dirichlet problem for nonlinear second-order elliptic equations. II. Complex Monge-Amp\`ere, and uniformly elliptic, equations}, Comm. Pure Appl. Math. {\bf 38} (1985), no. 2, 209--252.

\bibitem{Calabi57} Calabi, E. {\em On K\"ahler manifolds with vanishing canonical class}, Algebraic geometry and topology. A symposium in honor of S. Lefschetz, pp. 78--89, Princeton University Press, Princeton, NJ, 1957.

\bibitem{CX24} Cheng, J.; Xu, Y. {\em  Regularization of $m$-subharmonic functions and H\"older continuity}, Math. Res. Lett. {\bf 31} (2024), no. 4, 951--984.

\bibitem{CLM26} Chu, J; Liu, Y; McCleerey, N. {\em The eigenvalue problem for the complex Hessian operator on $m$-pseudoconvex manifolds}, J. Funct. Anal. {\bf 290} (2026), no. 3, Paper No. 111258, 58 pp.

\bibitem{CLMZ26} Chu, J; Liu, Y; McCleerey, N; Zhang, W.
{\em Some variational problems for the complex Monge--Amp\`ere operator}, preprint, arXiv:2604.13421.

\bibitem{CP22} Collins, T.; Picard, S. {\em The Dirichlet problem for the $k$-Hessian equation on a complex manifold}, Amer. J. Math. {\bf 144} (2022), no. 6, 1641--1680

\bibitem{DK14} Dinew, S.; Ko\l odziej, S. {\em A priori estimates for complex Hessian equations}, Anal. PDE {\bf 7} (2014), no. 1, 227--244.

\bibitem{DK17} Dinew, S.; Ko\l odziej, S. {\em Liouville and Calabi-Yau type theorems for complex Hessian equations}, Amer. J. Math. {\bf 139} (2017), no. 2, 403--415.

\bibitem{Guan} Guan, P. {\em On pointwise gradient estimates for the complex Monge-Amp\'ere equation}, preprint.

\bibitem{GP24} Guo, B.; Phong, D. H. {\em On $L^{\infty}$ estimates for fully non-linear partial differential equations}, Ann. of Math. (2) {\bf 200} (2024), no. 1, 365--398.

\bibitem{Hou09} Hou, Z. {\em Complex Hessian equation on K\"ahler manifold}, Int. Math. Res. Not. IMRN {\bf 2009}, no. 16, 3098--3111.

\bibitem{HMW10} Hou, Z.; Ma, X.-N.; Wu, D. {\em A second order estimate for complex Hessian equations on a compact K\"ahler manifold}, Math. Res. Lett. {\bf 17} (2010), no. 3, 547--561.

\bibitem{Kol96} Ko\l odziej, S. {\em Some sufficient conditions for solvability of the Dirichlet problem for the complex Monge-Amp\`re operator}, Ann. Polon. Math. {\bf 65} (1996), no. 1, 11--21.

\bibitem{WWZ21} Wang, J.; Wang, X.-J.; Zhou, B. {\em A priori estimate for the complex Monge-Amp\`ere equation}, Peking Math. J. {\bf 4} (2021), no. 1, 143--157.

\bibitem{Xu26} Xu, Y. {\em Regularization and H\"older continuity for complex Hessian equations on Hermitian manifolds}, preprint, arXiv:2608.15552.

\bibitem{Yau78} Yau, S.-T. {\em On the Ricci curvature of a compact K\"ahler manifold and the complex Monge-Amp\'ere equation. I}, Comm. Pure Appl. Math. {\bf 31} (1978), no. 3, 339--411.

\end{thebibliography}
\end{document}